\documentclass[11pt]{article}

\usepackage[utf8]{inputenc}
\usepackage[british]{babel}
\usepackage[a4paper, margin=2.5cm]{geometry}
\usepackage{amsmath, amsthm, amssymb, amsfonts}
\usepackage{mathtools}
\usepackage{mathabx}
\usepackage{thmtools}
\usepackage[shortlabels,inline]{enumitem}
\usepackage[font={small, it}]{caption}
\usepackage{subcaption}
\usepackage{hyperref}
\usepackage{xcolor}
\usepackage{bm}
\usepackage{changepage}
\usepackage{microtype}
\usepackage{stmaryrd}


\newcommand{\cP}{\ensuremath{\mathcal P}}

\newcommand{\cV}{\ensuremath{\mathcal V}}
\newcommand{\cW}{\ensuremath{\mathcal W}}
\newcommand{\cX}{\ensuremath{\mathcal X}}


\newcommand{\eps}{\varepsilon}
\renewcommand{\phi}{\varphi}
\renewcommand{\rho}{\varrho}


\DeclareMathOperator*{\N}{\mathbb{N}}

\DeclareMathOperator*{\R}{\mathbb{R}}

\DeclarePairedDelimiter\ceil{\lceil}{\rceil}
\DeclarePairedDelimiter\floor{\lfloor}{\rfloor}

\let\setminus=\smallsetminus

\newcommand{\Gnp}{G_{n, p}}

\newcommand{\mba}{\ensuremath{\mathbf{a}}}
\newcommand{\mbb}{\ensuremath{\mathbf{b}}}

\newcommand{\mbs}{\ensuremath{\mathbf{s}}}
\newcommand{\mbt}{\ensuremath{\mathbf{t}}}
\newcommand{\mbv}{\ensuremath{\mathbf{v}}}

\newcommand{\mbK}{\ensuremath{\mathbf{K}}}
\newcommand{\mbN}{\ensuremath{\mathbf{N}}}

\newcommand{\mbS}{\ensuremath{\mathbf{S}}}
\newcommand{\mbT}{\ensuremath{\mathbf{T}}}
\newcommand{\mbV}{\ensuremath{\mathbf{V}}}
\newcommand{\mbW}{\ensuremath{\mathbf{W}}}
\newcommand{\mbX}{\ensuremath{\mathbf{X}}}

\newcommand{\rev}{\ensuremath{\mathrm{rev}}}





\declaretheorem[parent=section]{theorem}
\declaretheorem[sibling=theorem]{lemma}
\declaretheorem[sibling=theorem]{proposition}

\declaretheorem[sibling=theorem]{corollary}
\declaretheorem[sibling=theorem,style=definition]{definition}


\setlength{\parindent}{0em}
\setlength{\parskip}{0.5em}
\setlist{itemsep=0.1em, topsep=0.1em, parsep=0.1em, partopsep=0.1em}

\hypersetup{
  colorlinks,
  linkcolor={red!60!black},
  citecolor={green!50!black},
  urlcolor={blue!80!black}
}

\colorlet{RoyalRed}{red!70!black}
\definecolor{RoyalBlue}{rgb}{0.25, 0.41, 0.88}
\definecolor{RoyalAzure}{rgb}{0.0, 0.22, 0.66}

\newlength{\bibitemsep}\setlength{\bibitemsep}{0.5pt}
\newlength{\bibparskip}\setlength{\bibparskip}{0.5pt}
\let\oldthebibliography\thebibliography
\renewcommand\thebibliography[1]{%
  \oldthebibliography{#1}%
  \setlength{\parskip}{\bibitemsep}%
  \setlength{\itemsep}{\bibparskip}%
}

\newcommand{\srbsquare}{\rotatebox{45}{\tiny{\ensuremath{\blacksquare}}}}
\newcommand{\asscheck}{\hskip.2em \null \hfill \srbsquare}

\newenvironment{assumptions}[1][]%
  {\small
    \begin{adjustwidth*}{1em}{1em}
      {\bf Verifying the assumptions of #1.}%
  }%
  {\asscheck\end{adjustwidth*}}

\title{Sprinkling a few random edges doubles the power}

\author{
  Rajko Nenadov\thanks{Department of Mathematics, ETH Z\"{u}rich, 8092
  Z\"{u}rich, Switzerland. Email: \texttt{rnenadov@math.ethz.ch}. Research
  supported in part by SNSF grant 200021-175573.}
  \and
  Milo\v{s} Truji\'{c}\thanks{Institute of Theoretical Computer Science, ETH
  Z\"{u}rich, 8092 Z\"{u}rich, Switzerland. Email: \texttt{mtrujic@inf.ethz.ch}.
  Research supported by grant no.\ 200021 169242 of the Swiss National Science
  Foundation.}
}
\date{}

\begin{document}
\maketitle

\begin{abstract}
  A seminal result by Koml\'os, Sark\"ozy, and Szemer\'edi states that if a
  graph $G$ with $n$ vertices has minimum degree at least $kn/(k + 1)$, for some
  $k \in \N$ and $n$ sufficiently large, then it contains the $k$-th power of a
  Hamilton cycle. This is easily seen to be the largest power of a Hamilton
  cycle one can guarantee, given such a minimum degree assumption. Following a
  recent trend of studying effects of adding random edges to a dense graph, the
  model known as the randomly perturbed graph, Dudek, Reiher, Ruci\'nski, and
  Schacht showed that if the minimum degree is at least $kn/(k + 1) + \alpha n$,
  for any constant $\alpha > 0$, then adding $O(n)$ random edges on top almost
  surely results in a graph which contains the $(k + 1)$-st power of a Hamilton
  cycle. We show that the effect of these random edges is significantly
  stronger, namely that one can almost surely find the $(2k + 1)$-st power. This
  is the largest power one can guarantee in such a setting.
\end{abstract}

\section{Introduction}

A thoroughly studied topic in extremal combinatorics are the \emph{Dirac-type}
questions: for a graph $G$ on $n$ vertices and a monotone property $\cP$, what
is the minimum $\alpha > 0$ such that $\delta(G) \geq \alpha n$ ensures that $G$
possesses $\cP$? A prime example being (hence the name) Dirac's
theorem~\cite{dirac1952some}, stating that every graph on $n$ vertices with
minimum degree at least $n/2$ contains a Hamilton cycle. Given a graph $H$ and
$k \in \N$, the \emph{$k$-th power $H^k$} of $H$ is defined as a graph on the
same vertex set and $\{v, w\} \in E(H^k)$ if and only if $v$ and $w$ are at
distance at most $k$ in $H$. In 1962 P\'osa conjectured~\cite{erdos1964problem}
that if a graph $G$ on $n$ vertices has minimum degree at least $2n/3$, then $G$
contains $C_n^2$---the square of a cycle of length $n$. In other words, $G$
contains the square of a Hamilton cycle. Seeking for a simpler proof of the
Hajnal-Szmer\'{e}di theorem on clique-factors~\cite{hajnal1970proof},
Seymour~\cite{seymour1973problem} had generalised this by conjecturing that
$\delta(G) \geq kn/(k + 1)$ suffices for the $k$-th power of a Hamilton cycle,
for any $k \in \N$. The conjecture was confirmed twenty-odd years later by
Koml\'{o}s, S\'{a}rk\"{o}zy, and Szemer\'{e}di, utilising the regularity lemma
and the blow-up lemma.

\begin{theorem}[Koml\'{o}s, S\'{a}rk\"{o}zy, Szemer\'{e}di~\cite{komlos1998proof}]
  \label{thm:KSS}
  For $k \in \N$, there exists $n_0 \in \N$ such that if $G$ has order $n$ with
  $n \geq n_0$ and $\delta(G) \geq \frac{k}{k + 1}n$, then $G$ contains the
  $k$-th power of a Hamilton cycle.
\end{theorem}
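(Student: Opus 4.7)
The plan is to combine the Szemerédi Regularity Lemma, the Blow-up Lemma, and an absorbing construction. I first apply the Regularity Lemma with parameters $\varepsilon \ll d \ll 1/k$ to obtain an equitable partition $V_0 \cup V_1 \cup \cdots \cup V_t$ of $V(G)$, with $|V_0| \le \varepsilon n$ and $|V_i| = L$ for all $i \ge 1$. I form the reduced graph $R$ on $\{V_1, \dots, V_t\}$ by joining $V_i$ and $V_j$ whenever the pair $(V_i, V_j)$ is $\varepsilon$-regular with density at least $d$. A routine degree count shows $\delta(R) \ge (k/(k+1) - \gamma)\,t$ for some $\gamma = \gamma(\varepsilon, d) \to 0$, so that $R$ essentially inherits the minimum-degree assumption of $G$.

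Next I pass to a suitable spanning structure of $R$. By the Hajnal--Szemer\'edi theorem applied to $R$ I obtain a $K_{k+1}$-factor covering all but $O(\gamma t)$ clusters. For a Hamilton-cycle power the cliques must be traversed cyclically, so I additionally prove that the $(k+1)$-cliques can be arranged in a cyclic order in which any two consecutive cliques are joined by a short $k$-th power ``bridge'' inside $R$; such bridges are found via a connectivity argument exploiting the minimum degree of $R$. Within each clique $\{V_{i_1}, \dots, V_{i_{k+1}}\}$ I then invoke the Blow-up Lemma on the corresponding $(k+1)$-partite super-regular subgraph of $G$ (after pruning atypical vertices) to embed the $k$-th power of a long path spanning almost all of these clusters. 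The Blow-up Lemma's ability to prescribe the location of a bounded number of vertices lets me align the endpoints of consecutive power-paths with the vertices used in the bridges.

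The genuine obstacle is closing this family of chained power-paths into a single Hamilton-cycle power while absorbing the exceptional set $V_0$ together with the vertices missed by the $K_{k+1}$-factor. To handle this I reserve, at the very start, a short absorbing $k$-th power path $P_{\mathrm{abs}} \subseteq G$ built from many independent ``absorber gadgets'': each gadget is a constant-length power-path segment into whose interior any single vertex $v \in V(G)$ can be inserted while preserving the $k$-th-power structure. The density hypothesis $\delta(G) \ge kn/(k+1)$ forces $\Omega(n^k)$ candidate gadgets per vertex, and a probabilistic selection then yields a robust absorber that simultaneously accommodates every vertex of a small reservoir.

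The hardest part of the argument, and the place where the tight minimum-degree threshold is really used, is simultaneously managing the absorber, the bridges, and the Blow-up embeddings so that (i) no vertex is reused, (ii) the super-regularity needed by the Blow-up Lemma survives all deletions, and (iii) the leftover set after the factor and the long power-paths is small enough to be consumed entirely by $P_{\mathrm{abs}}$. All the slack available for these bookkeeping steps has to come from the regularity error parameters $\varepsilon, d, \gamma$, so the orders $\varepsilon \ll d \ll \gamma \ll 1/k$ must be fixed with care before selecting $n_0$.
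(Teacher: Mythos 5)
The paper does not prove this statement at all: Theorem~\ref{thm:KSS} is imported verbatim from Koml\'os--S\'ark\"ozy--Szemer\'edi~\cite{komlos1998proof} and is used as a black box (to find the spanning $k$-cycle in the reduced graph in Section~5), so there is no internal proof to measure your attempt against. Judged on its own, your outline reproduces the broad architecture of the original argument (regularity lemma, an almost-perfect $K_{k+1}$-tiling of the reduced graph via Hajnal--Szemer\'edi, cyclic ``bridges'' between the cliques, and the blow-up lemma inside each blown-up clique), but with an absorption layer grafted on that the original proof does not use, and it is precisely there --- and in the steps you explicitly defer --- that the proof actually lives. As written this is a roadmap, not a proof.

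The concrete gaps: (1) Every delicate estimate must survive the \emph{exact} threshold $\delta(G)\ge kn/(k+1)$, with no $\alpha n$ of slack. In particular any $k+1$ vertices may have an empty common neighbourhood (one vertex from each class of the balanced complete $(k+1)$-partite graph), so your absorber gadgets, your ``connectivity argument'' for the bridges, and the alignment of blow-up endpoints can only lean on common neighbourhoods of $k$-sets; your claim that the degree hypothesis ``forces $\Omega(n^k)$ candidate gadgets per vertex'' is asserted, not proved, and is not even dimensionally right (inserting one vertex into the interior of a $k$-th power path already requires adjacency to $2k$ prescribed vertices, so the relevant count is over $2k$-tuples with a specific internal adjacency pattern). (2) Because $\delta(R)\ge(k/(k+1)-\gamma)t$ falls \emph{below} the Hajnal--Szemer\'edi threshold, the uncovered clusters contribute $\Theta(\gamma n)$ vertices, on top of $V_0$; a reservoir of linear size must then be absorbed, which forces the absorbing path itself to have linear length and to coexist with the super-regularity needed by the blow-up lemma after all deletions --- exactly the bookkeeping you label ``the hardest part'' and do not carry out. (3) The cyclic ordering of the cliques with $k$-th power bridges in $R$, and the transfer of those bridges to $G$ while prescribing endpoint images in the blow-up lemma, are stated as goals rather than established. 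So the proposal identifies the right ingredients but leaves the genuinely hard content --- the content of an entire paper by Koml\'os, S\'ark\"ozy and Szemer\'edi --- unproved; it cannot be accepted as a proof of Theorem~\ref{thm:KSS}.
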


It is known that Theorem~\ref{thm:KSS} is tight with respect to the minimum
degree requirement. However, Dudek, Reiher, Ruci\'{n}ski, and
Schacht~\cite{dudek2020powers} showed that every graph $G$ with such a minimum
degree not only contains the $k$-th power of a Hamilton cycle, but is very close
to containing the $(k + 1)$-st power in the following sense: for every $\alpha >
0$, if the minimum degree of $G$ is $(k/(k + 1) + \alpha)n$, then additionally
sprinkling $O(n)$ random edges on top of it almost surely results in a graph
which contains the $(k + 1)$-st power of a Hamilton cycle. As $O(n)$ random
edges typically form only very few triangles (or more generally, any short
cycles) and no larger cliques, their contribution towards such a structure is
quite limited. This in turn reveals that the original dense graph was already
close to containing $C_n^{k + 1}$ and it only needed a bit of patching here and
there. We improve their result by showing that such a graph $G$ is actually
close to containing $C_n^{2k + 1}$. That is, we show that in the same setting
one can almost surely find the $(2k + 1)$-st power of a Hamilton cycle.

\begin{theorem}\label{thm:main-result}
  For every $k \in \N$ and every $\alpha > 0$ there exists a positive constant
  $C(\alpha, k)$ such that every graph $\Gamma$ with $n$ vertices and
  $\delta(\Gamma) \geq (\frac{k}{k + 1} + \alpha)n$ is such that for $p = C/n$
  w.h.p.\footnote{An event is said to hold with high probability (w.h.p.\ for
  brevity) if the probability that it holds tends to $1$ as $n$ grows.}\ the
  graph $\Gamma \cup \Gnp$\footnote{$\Gnp$ stands for the probability space of
  all graphs with $n$ vertices where each edge exists independently of other
  edges with probability $p$.} contains the $(2k + 1)$-st power of a Hamilton
  cycle.
\end{theorem}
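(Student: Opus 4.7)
The plan rests on the following interleaving. Imagine for a moment that $\Gamma$ is close to the extremal example for the minimum-degree hypothesis, i.e.\ the balanced complete $(k+1)$-partite graph: $V$ splits into classes $V_1, \ldots, V_{k+1}$ of size $m = n/(k+1)$, every bipartite piece $\Gamma[V_i, V_{i'}]$ is (almost) complete, and the within-part degrees are $\Omega(\alpha n)$. Choose an ordering $v_1^i, \ldots, v_m^i$ of each class and place the vertices on a cycle in the cyclic order
\[
  v_1^1, v_1^2, \ldots, v_1^{k+1},\ v_2^1, v_2^2, \ldots, v_2^{k+1},\ \ldots,\ v_m^1, v_m^2, \ldots, v_m^{k+1}.
\]
Two positions in this arrangement are at cyclic distance $(j' - j)(k+1) + (i' - i)$; a direct case analysis shows that a pair at cyclic distance at most $2k+1$ lies in different classes unless the distance is exactly $k+1$, in which case the pair is $v_j^i, v_{j+1}^i$ for some $i$. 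Hence the arrangement yields a copy of $C_n^{2k+1}$ provided that (i) the required cross-class edges are present in $\Gamma \cup \Gnp$ (they essentially are because each $\Gamma[V_i, V_{i'}]$ is nearly complete), and (ii) each class supports a Hamilton path $v_1^i v_2^i \cdots v_m^i$, which supplies precisely the distance-$(k+1)$ edges.

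Accordingly, the plan runs in three stages. First, apply Szemer\'edi's regularity lemma to $\Gamma$ together with a $K_{k+1}$-tiling of the reduced graph (guaranteed by Hajnal--Szemer\'edi, as the reduced graph inherits the minimum degree $(k/(k+1) + \alpha/2)|R|$) to merge regular clusters into an approximately balanced partition $V_1, \ldots, V_{k+1}$ in which every bipartite piece $\Gamma[V_i, V_{i'}]$ is super-regular of density close to $1$ and each $\Gamma[V_i]$ has minimum degree at least $(\alpha/3)|V_i|$. Second, inside each $V_i$ apply the randomly perturbed Hamiltonicity theorem of Bohman, Frieze and Martin to $\Gamma[V_i] \cup \Gnp[V_i]$ to obtain, w.h.p., a Hamilton path; its hypotheses are met by the linear within-part minimum degree from $\Gamma$ and the $\Theta(|V_i|)$ edges from $\Gnp[V_i]$. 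Third, realise the interleaving; any cross-class edges from $\Gamma$ missing at specific positions are either repaired by a local swap in the ordering (the super-regularity of $\Gamma[V_i, V_{i'}]$ gives ample alternatives) or provided by an edge of $\Gnp$.

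The main obstacle is executing all of the above under a \emph{single} realisation of $\Gnp$: the same random graph must simultaneously provide Hamilton-path randomness inside every $V_i$, patch the missing cross-class edges, and accommodate exceptional vertices left over by the regularity partition. I would split $\Gnp$ into a bounded number of independent sub-samplings (possible at the cost of a constant factor in $C$) and dedicate each to a distinct task; the leftover vertices are handled by an \emph{absorbing} $(2k+1)$-st power of a short path prepared in advance, built from $\Gamma$-edges together with a small reservoir of $\Gnp$-edges. A second delicate point is the \emph{far-from-extremal} regime: when $\Gamma$ is not close to multipartite the clean partition above may fail, but the slack in $\delta(\Gamma)$ then produces many copies of $K_{k+2}$ and enough structural flexibility in $\Gamma$ alone to build the $(2k+1)$-st power through a Koml\'os--S\'ark\"ozy--Szemer\'edi-style absorption-plus-connection argument. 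A stability dichotomy marries the two regimes and completes the proof.
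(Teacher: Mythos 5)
There is a genuine gap, and it sits exactly where you put the weight: the stability dichotomy. The ``far-from-extremal'' horn is false. Take $k=2$ and let $\Gamma$ be the graph on $X \cup Y$ with $|X| = (1/3-\alpha)n$, $|Y| = (2/3+\alpha)n$, where $\Gamma[X,Y]$ is complete bipartite, $\Gamma[Y]$ is complete and $X$ is independent. Then $\delta(\Gamma) = (2/3+\alpha)n$, and $\Gamma$ is $\Omega(n^2)$-far in edit distance from the balanced complete $3$-partite graph, yet $\Gamma$ alone contains no $C_n^5$: a copy of $C_n^5$ contains $\lfloor n/6\rfloor$ disjoint $K_6$'s covering all but at most five vertices, each such clique meets the independent set $X$ in at most one vertex, and $\lfloor n/6\rfloor < |X|$, so a linear fraction of $X$ stays uncovered. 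Hence in the non-extremal regime the random edges are still needed in an essential way; ``many copies of $K_{k+2}$ plus a Koml\'os--S\'ark\"ozy--Szemer\'edi-style absorption in $\Gamma$ alone'' cannot work, because with $\delta(\Gamma)=(k/(k+1)+\alpha)n$ the graph $\Gamma$ need not even contain a $K_{2k+2}$-factor (the genuine minimum-degree threshold for the target power is $(2k+1)/(2k+2)$, far above your hypothesis).

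The extremal horn also overreaches. From $\delta(R)\ge (k/(k+1)+\alpha/2)t$ you do get a $K_{k+1}$-tiling of the reduced graph, but you cannot in general merge clusters of different tiles into $k+1$ global classes $V_1,\dots,V_{k+1}$ so that every $\Gamma[V_i,V_{i'}]$ is super-regular with density close to $1$: regular pairs only come with density at least $d$, and clusters belonging to distinct tiles need not see each other at all. That conclusion is available only when $\Gamma$ genuinely is close to the balanced complete $(k+1)$-partite graph, which is precisely the hypothesis your dichotomy was meant to avoid, so the generic graph (neither close to multipartite nor doable by $\Gamma$ alone) is not covered by either horn. Your interleaving computation (within-class pairs occur only at distance exactly $k+1$, all other short distances are cross-class) is a correct and useful way to see what the random edges must supply, and it mirrors the paper's decomposition of a $(2k+1)$-path into a $\Gamma$-part and $k+1$ sparse paths taken from $\Gnp$; but to make it work for every $\Gamma$ with the stated minimum degree it has to be run locally along the $k$-th power of a Hamilton cycle in the reduced graph (two vertices per cluster per window, with extendibility and absorption bookkeeping), rather than via a global $(k+1)$-partition backed by a stability argument.
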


Theorem~\ref{thm:main-result} is asymptotically optimal in every aspect.
Firstly, having $p = o(1/n)$ does not even guarantee a copy of $C_n^{k + 1}$ as
observed by Dudek, Reiher, Ruci\'{n}ski, and Schacht~\cite{dudek2020powers}.

Secondly, one cannot hope to obtain a higher power than the $(2k + 1)$-st. We
demonstrate this for $k = 2$. Take $\alpha = 0.01$ and consider the vertex
partition $V(\Gamma) = X \cup Y$ with $|X| = (1/3 - \alpha)n$ and $|Y| = (2/3 +
\alpha)n$, where $\Gamma[X, Y]$ is a complete bipartite graph, $\Gamma[Y]$ a
complete graph, and $\Gamma[X]$ an empty graph. We aim to show that $\Gamma \cup
\Gnp$, for $p = O(1/n)$, typically does not contain $C_n^{2k + 2} = C_n^6$. Note
that $C_n^6$ contains $\floor{n/7}$ vertex-disjoint copies of $K_7$. As w.h.p.\
$\Gnp$ does not contain $K_4$'s and only has $o(n)$ triangles, all but at most
$o(n)$ copies of $K_7$ in $\Gamma \cup \Gnp$ have to intersect $Y$ in at least
five vertices. A simple calculation shows that every family of vertex-disjoint
$K_7$'s leaves a linear fraction of vertices in $X$ uncovered.

Lastly, having minimum degree $\delta(\Gamma) \geq nk/(k + 1)$ is not enough for
the $(2k + 1)$-st power even for $p = 0.1 \log n/n$. Let $\Gamma$ be a complete
$(k + 1)$-partite graph $V_1 \cup \dotsb \cup V_{k + 1}$ with all colour classes
being of the same size. Since w.h.p.\ $\Gnp$ contains at least $\eps n$ isolated
vertices (where $\eps > 0$ only depends on the chosen constant $0.1$ in $p$)
there is $i \in [k + 1]$ such that at least $\eps n/(k + 1)$ vertices of $V_i$
are isolated in $\Gnp$. Denote the set of these vertices by $I$. As before, in
order for $\Gamma \cup \Gnp$ to contain $C_n^{2k + 1}$ we need a family of
vertex-disjoint $K_{2k + 2}$'s which cover all the vertices. Note that every
copy of $K_{2k + 2}$ which contains a vertex of $I$ necessarily needs to
intersect some $V_j$, $j \neq i$, in at least three vertices, which must form a
clique in $\Gnp$. Moreover, no two vertices in $I$ can be covered by the same
copy of $K_{2k + 2}$. Consequently, $\Gnp$ needs to contain at least $|I| = \eps
n/(k + 1)$ triangles, which w.h.p.\ is not the case.

We suspect that $\delta(\Gamma) \geq kn/(k + 1)$ and $p = C\log n/n$, for a
sufficiently large constant $C > 0$, is enough for $\Gamma \cup \Gnp$ to w.h.p.\
contain $C_n^{2k + 1}$. We leave this as an open problem.

\subsection{Notation}

Given $n \in \N$, we abbreviate $\{1, \dotsc, n\}$ by $[n]$. Similarly, for $n,
m \in \N$ with $n \leq m$, we write $[n, m]$ to denote the set $\{n, n + 1,
\dotsc, m - 1, m\}$. For $a, b, x, y \in \R$ we let $x = (a \pm b)y$ stand for
$(a - b)y \leq x \leq (a + b)y$. We omit the floor and ceiling symbols whenever
they are not crucial. We write $C_{3.6}$ to indicate that the constant $C_{3.6}$
is given by Theorem/Lemma/Proposition 3.6.

Our graph theoretic notation is mostly standard and follows the one
from~\cite{bondy2008graph}. We outline several possibly non-standard usages. For
a graph $G = (V, E)$ and two subsets of vertices $X, Y \subseteq V(G)$, we let
$N_G(X, Y)$ stand for the common neighbourhood of vertices from $X$ into $Y$,
that is $N_G(X, Y) := \bigcap_{x \in X} N_G(x) \cap Y$. We denote by $e_G(X, Y)$
the number of edges with one endpoint in $X$ and the other in $Y$ and by $d_G(X,
Y)$ the density of the bipartite subgraph induced by $X$ and $Y$, namely $d_G(X,
Y) := e_G(X, Y)/(|X||Y|)$. We use $\deg_G(v, X)$ as a shorthand for $|N_G(\{v\},
X)|$. We drop the subscript $G$ whenever it is clear from the context which
graph we are concerned with.

We use upper-case bold letters to denote ordered tuples of sets, e.g.\ $\mbV =
(V_1, \dotsc, V_k)$, and lower-case bold letters to denote ordered tuples of
vertices, e.g.\ $\mbv = (v_1, \dotsc, v_k)$. We sometimes refer to $\mbv$
interchangeably as a tuple and a set of its elements, the usage should be clear
from the context. We write $\rev(\mbV)$ for the tuple obtained by reversing the
order of $\mbV$, that is $\rev(\mbV) = (V_k, \dotsc, V_1)$, and similarly
$\rev(\mbv)$. Given an integer $i \in \N$, we let $\mbV^i$ stand for the $i$-th
element of the tuple $\mbV$. Additionally, $\mbV^{\leq i}$ and $\mbV^{\geq i}$
denote the tuples obtained by considering only the first $i$ and the last $k - i
+ 1$ elements of a $k$-element tuple $\mbV$, respectively. The tuple $(\mbV,
\mbW)$ is obtained by concatenation of $\mbV = (V_1, \dotsc, V_k)$ and $\mbW =
(W_1, \dotsc, W_\ell)$, that is $(\mbV, \mbW) = (V_1, \dotsc, V_k, W_1, \dotsc,
W_\ell)$. Lastly, given a set $X$, we let $\mbV \setminus X$ denote the tuple
obtained by removing the set $X$ from every element of the tuple $\mbV$.

Given $\ell, k \in \N$, $P_\ell^{2k + 1}$ is a graph defined on the vertex set
$\{v_1, \dotsc, v_\ell\}$ with the edge set $\{v_i, v_j\}$ for all distinct $i,
j \in [\ell]$ with $|i - j| \leq 2k + 1$. We refer to it as the \emph{$(2k +
1)$-st power of a path} ($(2k + 1)$-path for short) of size $\ell$. A $(2k +
1)$-cycle is defined as a natural analogue. The ordered sets $(v_1, \dotsc,
v_{2k + 2})$ and $(v_{\ell - 2k + 1}, \dotsc, v_{\ell})$ are called the {\em
endpoints} of the $(2k + 1)$-path and are necessarily copies of $K_{2k + 2}$
($(2k + 2)$-clique for short). We say that a $(2k + 1)$-path \emph{connects} two
$(2k + 2)$-cliques $\mbs$ and $\mbt$, if $\mbs$ and $\mbt$ are its endpoints. A
path $P \subseteq P_\ell^{2k + 1}$ is the \emph{skeleton} of $P_\ell^{2k + 1}$
if $P^{2k + 1} = P_\ell^{2k + 1}$. Lastly, observe that a union of a $(2k +
1)$-path $P$ connecting some $\mbs$ to $\mbt$ and a $(2k + 1)$-path $Q$
connecting $\mbt$ to some $\mbt'$, and which are otherwise vertex-disjoint, is a
$(2k + 1)$-path $P \cup Q$ connecting $\mbs$ to $\mbt'$.

Lastly, since a lot of tedious work goes into checking whether the assumptions
of some lemma are fulfilled for an application, in an attempt to improve
readability we do this in dedicated paragraphs ending with $\srbsquare$.

\section{Outline of the proof}\label{sec:outline}

Similarly to many recent problems of embedding spanning structures into (random)
graphs, we make use of the so-called \emph{absorbing method}. The method was
first introduced by R\"{o}dl, Ruci\'{n}ski, and
Szemer\'{e}di~\cite{rodl2006dirac} (implicitly used before
in~\cite{erdHos1991vertex, krivelevich1997triangle}).

\begin{definition}\label{def:absorber}
  Let $G$ be a graph and $k \in \N$. A $(2k + 1)$-path $P \subseteq G$ is said
  to be \emph{$X$-absorbing} for a set $X \subseteq V(G) \setminus V(P)$ if for
  every $X^\star \subseteq X$ there is a $(2k + 1)$-path $P^\star$ with the same
  endpoints as $P$ and such that $V(P^\star) = V(P) \cup X^\star$.
\end{definition}

On a high level the proof consists of two steps:
\begin{enumerate*}[label=(\textit{\roman*})]
  \item find an $X$-absorbing path $P$ with endpoints $\mba$ and $\mbb$ for some
    large set of vertices $X$;
  \item construct a long $(2k + 1)$-path from $\mbb$ to $\mba$ which contains
    all the vertices of $V(G) \setminus (X \cup V(P))$, no vertex from $P$
    (other than the endpoints), and possibly some vertices of $X$.
\end{enumerate*}
The absorbing property of $P$ allows us to transform this $(2k + 1)$-cycle into
a $(2k + 1)$-cycle which contains all vertices of $G$.

Most of the proofs implementing this strategy rely on some form of a `connecting
lemma'. In our case, such a lemma would say that for arbitrary two $(2k +
2)$-cliques $\mbs$ and $\mbt$, there is a short $(2k + 1)$-path connecting
$\mbs$ to $\mbt$. Unfortunately, one cannot hope for such a statement for the
following reason. In order to `grow' a $(2k + 1)$-path starting at, say, $\mbs =
(v_1, \dotsc, v_{2k + 2})$, the vertices $(v_2, \dotsc, v_{2k + 2})$ need to
have a common neighbour. Since the minimum degree of $\Gamma$ is only $(k/(k +
1) + \alpha)n$, this can easily not be the case even after adding the random
edges on top. Note that if we were looking for the $(k + 1)$-st power of a
cycle, this would not cause troubles as every set of $k + 1$ vertices has a
large common neighbourhood already in the graph $\Gamma$.

In order to go around this issue we need to have much better control over the
endpoints of every constructed $(2k + 1)$-path throughout our embedding
procedure, that is over $(2k + 2)$-cliques to which we apply the `connecting
lemma'. We achieve this by embedding everything carefully into partition classes
given by Szemer\'{e}di's regularity lemma applied to the dense graph $\Gamma$.
Let $V_0 \cup V_1 \cup \dotsb \cup V_t$ be the partition given by the regularity
lemma. A well-known fact is that the reduced graph $R$ inherits the minimum
degree of the graph $\Gamma$ and thus, by Theorem~\ref{thm:KSS}, contains a
spanning $k$-cycle. Moreover, since the minimum degree of $\Gamma$ is
\emph{slightly larger} than $kn/(k + 1)$, every $(k + 1)$-tuple of vertices in
$R$ has several common neighbours. In particular, the partition classes $V_1,
\dotsc, V_{k + 1}, V_z$, for some $z \in \bigcap_{i \in [k + 1]} N_R(i)$, are
all pairwise $\eps$-regular with positive density. From properties of regularity
we have that almost every set of $2k + 2$ vertices from $V_1 \cup \dotsb \cup
V_{k + 1}$ has a significant common neighbourhood inside of the set $V_z$.
Embedding a $(2k + 2)$-clique $(v_1, \dotsc, v_{2k + 2})$ carefully into $(V_1,
\dotsc, V_{k + 1})$, in a way such that $v_{2i - 1}, v_{2i} \in V_i$ for all $i
\in [k + 1]$, allows us to `grow' a $(2k + 1)$-path starting from it: taking any
edge $\{v_{2k + 3}, v_{2k + 4}\}$ given by $\Gnp$ in $\bigcap_{i \in [2k + 2]}
N_\Gamma(v_i) \cap V_z$, extends the $(2k + 2)$-clique into a $(2k + 1)$-path
with $2k + 4$ vertices. Taking an edge $\{v_{2k + 5}, v_{2k + 6}\} \in V_1$,
which is again given by $\Gnp$ and where $v_{2k + 5}, v_{2k + 6}$ lie in the
common neighbourhood of appropriate vertices in $\Gamma$, we extend the path
even further. Utilising the Counting lemma and Janson's inequality, we may
extend it into an arbitrarily large path (of constant size), or connect two $(2k
+ 2)$-cliques which are embedded in a `nice' way (see
Definition~\ref{def:extendible-tuples} for a more formal description of
`niceness').

Having this in mind, the proof goes along the following lines. Initially, take a
random subset $X_i \subseteq V_i$ for all $i \in [t]$ and denote the union of
all these vertices by $X$. By choosing the size of $X_i$'s carefully, every
vertex $x \in X$ still has, say, $\deg_\Gamma(x, V(\Gamma) \setminus X) \geq
(k/(k + 1) + \alpha/2)n$. We use this fact in order to construct a $(2k +
1)$-path $P$ which is $X$-absorbing and uses roughly the same number of vertices
in the remaining partition classes $V_i \setminus X_i$. Next, most of the
vertices remaining in $V_i \setminus X_i$ are to be covered by a long $(2k +
1)$-path $P'$ which is constructed as outlined above: greedily find short $(2k +
1)$-paths whose endpoints can be extended and use this property in order to
merge them into a long path. Furthermore, such a path $P'$ covers all but a
negligible linear fraction of vertices in $V_i \setminus X_i$, for all $i \in
[t]$ and contains the absorbing path $P$ as a subgraph.

As is usual with proofs involving the regularity lemma, the set of `garbage'
vertices $V_0$ is completely out of our control and is not involved in any of
the embedded paths thus far. Let $V_0'$ denote the union of $V_0$ with all the
vertices of $V_i \setminus X_i$ which are not covered by $P'$, for all $i \in
[t]$. Since $V_0'$ is still very small and $X$ is a uniformly at random chosen
set of suitable size, w.h.p.\ all vertices $v \in V_0'$ have $\deg_\Gamma(v, X)
\geq (k/(k + 1) + \alpha/2)|X|$, say. Then we can, analogously as before, find a
$V_0'$-absorbing $(2k + 1)$-path $P''$ which lies inside of the set $X$ in its
entirety. By using the vertices of $X$ and $V_0'$ only and the `extendible'
property of the endpoints of the paths $P'$ and $P''$ we merge them into a $(2k
+ 1)$-cycle. Finally, we use the absorbing property of $P$ and $P''$ to
incorporate all the remaining vertices of $X$ and $V_0'$ into a spanning $(2k +
1)$-cycle.

\section{Szemer\'{e}di's regularity lemma and random graphs}

Let $G$ be a graph and $\eps$ a positive constant. We say that a pair $(V_1,
V_2)$ of disjoint subsets of $V(G)$ is \emph{$\eps$-regular} if for every $U_i
\subseteq V_i$ (for $i \in \{1, 2\}$) of size $|U_i| \ge \eps |V_i|$ we have
\[
  \big| d(V_1, V_2) - d(U_1, U_2) \big| \leq \eps.
\]
In other words, every two sufficiently large subsets of $V_1$ and $V_2$ induce a
bipartite graph which has roughly the same density as the one induced by $V_1$
and $V_2$. A $k$-tuple $(V_1, \dotsc, V_k)$ is said to be \emph{$\eps$-regular}
with density at least $d$ if $(V_i, V_j)$ forms an $\eps$-regular pair with
$d(V_i, V_j) \geq d$ for all $1 \leq i < j \leq k$. For brevity, we sometimes
write $(\eps, d)$-regular to mean $\eps$-regular with density at least $d$. As a
direct consequence of the definition we get the following proposition.

\begin{proposition}\label{prop:large-reg-inheritance}
  Let $0 < \eps < 1/2$ and $(V_1, V_2)$ be an $\eps$-regular pair with
  density $d \in (0, 1)$. If $\eps < \delta \leq 1/2$, then two subsets $U_1
  \subseteq V_1$ and $U_2 \subseteq V_2$ of size $|U_1| \geq \delta|V_1|$ and
  $|U_2| \geq \delta|V_2|$ form an $\eps/\delta$-regular pair with density at
  least $d - \eps$.
\end{proposition}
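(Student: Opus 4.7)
The plan is to unpack the definition of $\eps$-regularity twice and invoke the triangle inequality. Two things need to be checked: that $d(U_1, U_2) \geq d - \eps$, and that for all sufficiently large $W_i \subseteq U_i$ one has $|d(U_1, U_2) - d(W_1, W_2)| \leq \eps/\delta$.

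For the density bound, I would observe that since $|U_i| \geq \delta|V_i| \geq \eps|V_i|$ for $i \in \{1, 2\}$, the pair $(U_1, U_2)$ qualifies as a ``test pair'' in the definition of $\eps$-regularity applied to $(V_1, V_2)$. Hence $|d(V_1, V_2) - d(U_1, U_2)| \leq \eps$, which immediately gives $d(U_1, U_2) \geq d - \eps$.

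For the regularity bound, I would take arbitrary $W_1 \subseteq U_1$, $W_2 \subseteq U_2$ with $|W_i| \geq (\eps/\delta)|U_i|$. The key observation is the chain
\[
  |W_i| \;\geq\; (\eps/\delta)\,|U_i| \;\geq\; (\eps/\delta)\cdot \delta |V_i| \;=\; \eps|V_i|,
\]
so the pair $(W_1, W_2)$ also qualifies as a test pair for $\eps$-regularity of $(V_1, V_2)$. Therefore $|d(V_1, V_2) - d(W_1, W_2)| \leq \eps$. Combining this with the bound $|d(V_1, V_2) - d(U_1, U_2)| \leq \eps$ from the previous paragraph via the triangle inequality yields $|d(U_1, U_2) - d(W_1, W_2)| \leq 2\eps$.

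The final step is to note that $2\eps \leq \eps/\delta$, which holds precisely because $\delta \leq 1/2$. This is the only place the hypothesis $\delta \leq 1/2$ is used, and there is no real obstacle in the argument; the proof is essentially a one-line application of the definition together with monotonicity of the subset conditions.
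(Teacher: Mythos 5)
Your proof is correct, and since the paper simply asserts this proposition as a direct consequence of the definition, your argument (test both $(U_1,U_2)$ and the subsets $(W_1,W_2)$ against $\eps$-regularity of $(V_1,V_2)$, then use the triangle inequality and $2\eps \leq \eps/\delta$) is exactly the intended standard verification. No gaps.
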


A remarkable result of Szemer\'edi~\cite{szemeredi1975regular} states that every
graph can be almost completely decomposed into a few $\eps$-regular pairs. We
use the following variant of Szemer\'{e}di's theorem.

\begin{theorem}[Degree form of the regularity lemma~\cite{komlos1996szemeredi}]
  \label{thm:degree-form-regularity}
  For every $\eps > 0$ and $m \geq 1$, there exists an $M(\eps, m) \geq m$ such
  that for every $d \in [0, 1)$ and every graph $G$ with at least $M$ vertices
  the following holds. There exists $t \in [m, M]$, a partition $(V_i)_{i =
  0}^{t}$ of $V(G)$, and a spanning subgraph $G' \subseteq G$ satisfying:
  \begin{enumerate}[label=(\textit{\roman*})]
    \item\label{reg-except} $|V_0| \leq \eps n$,
    \item\label{reg-equipart} $|V_1| = \dotsb = |V_t| \in [(1 - \eps)n/t, n/t]$,
    \item\label{reg-degree} $\deg_{G'}(v) \geq \deg_G(v) - (d + \eps)n$, for all
      $v \in V(G)$,
    \item\label{reg-empty} $e(G'[V_i]) = 0$, for all $i \in [t]$, and
    \item\label{reg-density} for all $1 \leq i < j \leq t$, the pair $(V_i,
      V_j)$ is $\eps$-regular with density either $0$ or at least $d$.
  \end{enumerate}
\end{theorem}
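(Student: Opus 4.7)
The plan is to derive this degree form from the classical Szemer\'edi regularity lemma via a short cleanup step that absorbs a few `bad' partition classes into the exceptional set $V_0$ and then prunes a controlled set of edges, checking that no vertex loses too many in the process. The key tension is that the classical lemma only controls the \emph{total} number of irregular pairs, whereas condition (iii) demands a \emph{per-vertex} degree guarantee, so some averaging has to be converted into a worst-case bound.

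I would first apply the classical regularity lemma with parameters $\eps' := \eps^2/16$ and $m' := \max(m, \lceil 4/\eps \rceil)$ to obtain a partition $V_0 \cup V_1' \cup \ldots \cup V_{t'}'$ with $|V_0| \le \eps' n$, equal-sized classes, and at most $\eps' \binom{t'}{2}$ pairs which are not $\eps'$-regular. Call a class $V_i'$ \emph{bad} if it lies in more than $\sqrt{\eps'}\, t'$ irregular pairs; by a double-counting argument there are at most $2\sqrt{\eps'}\, t'$ bad classes, and moving all their vertices into $V_0$ keeps the exceptional set of size at most $\eps' n + 2\sqrt{\eps'}\, n \le \eps n$. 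After relabelling the remaining classes as $V_1, \ldots, V_t$, every $V_i$ belongs to at most $\sqrt{\eps'}\, t$ irregular pairs. I would then define $G'$ by removing from $G$: (a) all edges with both endpoints inside the same $V_i$ for $i \in [t]$; (b) all edges spanning an irregular pair $(V_i, V_j)$ with $i, j \in [t]$; (c) all edges spanning a pair $(V_i, V_j)$ with density less than $d$. No edges incident to $V_0$ are removed.

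Conditions (i), (ii), (iv), and (v) then hold by construction, using $\eps' \le \eps$ so that the surviving pairs remain $\eps$-regular with the prescribed density dichotomy. For (iii), any vertex $v \in V_i$ with $i \in [t]$ loses at most $|V_i| \le n/t \le \eps n/4$ edges of type (a), at most $\sqrt{\eps'}\, t \cdot (n/t) \le \eps n/4$ edges of type (b), and at most $d \cdot n$ edges of type (c), giving total loss at most $(d+\eps)n$; a vertex $v \in V_0$ loses no edges and so satisfies (iii) trivially. The main obstacle is simply the parameter balancing: $\eps'$ must be chosen as a small enough polynomial in $\eps$ so that the initial $|V_0|$ plus the bad-class absorption fits the target budget \emph{and} each surviving vertex's irregular-pair loss stays under threshold. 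Beyond that bookkeeping, the derivation is routine on top of the classical lemma.
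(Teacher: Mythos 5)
Note first that the paper does not prove this statement at all---it is quoted verbatim from the Koml\'os--Simonovits survey \cite{komlos1996szemeredi}---so the only question is whether your derivation from the classical regularity lemma is sound. Your treatment of irregular pairs is the right idea (absorbing the few ``bad'' classes into $V_0$ converts the average bound into a per-class, hence per-vertex, bound), but the step handling low-density pairs has a genuine gap: you claim every vertex loses at most $d\cdot n$ edges of type (c), and this is false. That a pair $(V_i,V_j)$ has density less than $d$ bounds the \emph{total} number of edges it spans, not the degree of an individual vertex into $V_j$. An $\eps'$-regular pair of density just below $d$ can contain a vertex $v\in V_i$ adjacent to \emph{all} of $V_j$ (regularity only forces that fewer than $\eps'|V_i|$ vertices of $V_i$ have degree at least $(d(V_i,V_j)+\eps')|V_j|$ into $V_j$), and a single vertex can be such an outlier simultaneously for essentially every $j$: take $v$ adjacent to everything and the rest of $V_i$ quasirandom with density $d/2$. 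For the subgraph $G'$ you define, that vertex loses almost $n$ edges, so (iii) fails, and this cannot be repaired by parameter bookkeeping since the pairs involved are legitimately regular and of low density.

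The standard fix is an additional vertex-level (not class-level) cleanup, and it genuinely uses regularity: for each low-density $\eps'$-regular pair $(V_i,V_j)$, fewer than $\eps'|V_i|$ vertices of $V_i$ have degree at least $(d+\eps')|V_j|$ into $V_j$; by double counting, at most $\sqrt{\eps'}\,|V_i|$ vertices of $V_i$ are atypical for more than $\sqrt{\eps'}\,t$ classes $j$. Move all such vertices into $V_0$ (this cannot be done by discarding whole classes, since there may be $\Omega(t^2)$ low-density pairs), and then trim all classes back to a common size, dumping the trimmed vertices into $V_0$ as well so that (ii) holds exactly; the enlargement of $V_0$ is $O(\sqrt{\eps'})\,n$ and fits the budget. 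After this, every surviving vertex loses at most $(d+\eps')n+\sqrt{\eps'}\,n$ from low-density pairs, which together with your (a) and (b) bounds gives (iii). Two smaller points: your choice $m'=\max(m,\lceil 4/\eps\rceil)$ does not guarantee $t\ge m$ after discarding bad classes (take $m'\ge 2m$, say), and the lower bound $|V_i|\ge(1-\eps)n/t$ needs the one-line check that $(1-\eps')(1-2\sqrt{\eps'})\ge 1-\eps$; both are routine, unlike the type-(c) issue above.
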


One usually refers to the partition given by
Theorem~\ref{thm:degree-form-regularity} as an \emph{$\eps$-regular partition}
with \emph{exceptional set} $V_0$. Given a graph $G$, a partition $\cV =
(V_i)_{i = 0}^{t}$ of $V(G)$, and a parameter $d \in (0, 1)$, we define the
\emph{$(\eps, d, G, \cV)$-reduced graph} $R$ as a graph on the vertex set $[t]$
with $\{i, j\}$ being an edge of $R$ if and only if $(V_i, V_j)$ is
\emph{$(\eps, d, G)$-regular}, that is forms an $\eps$-regular pair in $G$ with
density at least $d$.

We also make use of the \emph{counting lemma}, a result often accompanying the
regularity lemma (see, e.g.~\cite{rodl2010regularity}).

\begin{lemma}[Counting Lemma]\label{lemma:counting}
  For every graph $H$ and every $\gamma > 0$, there exists $\eps > 0$ such that
  the following holds. Let $\Gamma$ be a graph, $\cW$ a family of $v(H)$
  disjoint subsets of $V(\Gamma)$, and $\sigma \colon V(H) \to \cW$ a bijection
  such that for every $\{v,w\} \in E(H)$ the pair $(\sigma(v), \sigma(w))$ is
  $\eps$-regular. Then the number of embeddings $\phi \colon H \to \Gamma$ such
  that $\phi(v) \in \sigma(v)$ for every $v \in V(H)$ is
  \[
    \Big( \prod_{W \in \cW} |W| \Big) \Big( \prod_{\{v, w\} \in E(H)} \big(
    d(\sigma(v),\sigma(w)) \pm \gamma \big) \Big).
  \]
\end{lemma}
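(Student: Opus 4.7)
My plan is to count embeddings of $H$ by embedding its vertices one at a time, using the $\eps$-regularity of the pairs $(\sigma(v), \sigma(w))$ at each step to control the number of extensions. Fix an arbitrary ordering $v_1, \ldots, v_h$ of $V(H)$, where $h := v(H)$, and for $i \in [h]$ set $N_i := \{j < i : \{v_j, v_i\} \in E(H)\}$. Given a partial embedding $\phi_i = (\phi(v_1), \ldots, \phi(v_i))$ with $\phi(v_j) \in \sigma(v_j)$, the set of valid choices for $\phi(v_{i+1})$ is
\[
  W_{i+1}(\phi_i) := \Big( \bigcap_{j \in N_{i+1}} N_\Gamma(\phi(v_j)) \Big) \cap \sigma(v_{i+1}).
\]
I would prove, by induction on $i$, that all but a small fraction of partial embeddings $\phi_i$ are \emph{typical} in the sense that
\[
  |W_{i+1}(\phi_i)| = \Big( \prod_{j \in N_{i+1}} d(\sigma(v_j), \sigma(v_{i+1})) \pm \eta_i \Big) |\sigma(v_{i+1})|
\]
for a slowly growing error $\eta_i$, and then multiply these counts together to get the claimed expression.

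The key tool for the induction is the following standard consequence of $\eps$-regularity: if $(A, B)$ is $\eps$-regular with density $d$ and $S \subseteq A$ satisfies $|S| \geq \eps |A|$, then all but at most $\eps |B|$ vertices $b \in B$ have $|N_\Gamma(b) \cap S| = (d \pm \eps)|S|$. Applying this iteratively, one back-edge $\{v_j, v_{i+1}\} \in E(H)$ at a time, the intersection $W_{i+1}(\phi_i)$ shrinks by a factor of $d(\sigma(v_j), \sigma(v_{i+1})) \pm O(\eps)$ for a typical choice, while only an $\eps$-fraction of extensions can be atypical at any given step. Summing the atypical count over all $i$ bounds the total number of non-typical partial embeddings; the remaining embeddings contribute the product of edge densities times $\prod_{W \in \cW} |W|$, up to an additive error.

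The main obstacle will be controlling the accumulation of errors through the $h$ embedding steps, each of which may touch up to $h-1$ edges of $H$. At every step we incur both a multiplicative error of order $\eps$ per edge and lose an $\eps$-fraction of extensions; for the final additive discrepancy to be at most $\gamma \prod_{W \in \cW} |W|$, I would pick $\eps$ extremely small in terms of $h$ and $\gamma$, say $\eps \leq (\gamma / (4h))^{2h}$, so that the telescoping works out. A subtle case to handle separately is an edge $\{v, w\} \in E(H)$ whose density $d(\sigma(v), \sigma(w))$ drops below $\eps$: there the iterative counting cannot give a multiplicative estimate, but the target contribution of such an edge to the product is itself smaller than $\gamma / \binom{h}{2}$, so the discrepancy is automatically absorbed into the $\gamma$ error term. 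With these parameters the induction closes and yields the lemma.
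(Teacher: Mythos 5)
The paper never proves this lemma: it is quoted as a standard companion of the regularity lemma with a pointer to the literature, so there is no in-paper argument to compare yours against. Your sketch is the standard greedy-embedding proof of that standard fact, and in outline it is sound: embed the vertices of $H$ one at a time, use the fact that in an $\eps$-regular pair $(A,B)$ all but at most $2\eps|B|$ vertices of $B$ (note: $2\eps|B|$, not $\eps|B|$, since both low and high deviations must be excluded) have $(d\pm\eps)|S|$ neighbours in any fixed $S\subseteq A$ with $|S|\ge\eps|A|$, track typical partial embeddings, and take $\eps$ small in terms of $v(H)$ and $\gamma$. The additive form of the error in the statement (the $\pm\gamma$ sits outside the product of densities) is exactly what allows sparse configurations to be discarded, as you observe.

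The one step that would fail as written is your treatment of small densities. You only special-case an edge with $d(\sigma(v),\sigma(w))<\eps$, but the iterative argument already breaks when densities are merely comparable to $\eps$ without dropping below it: if, say, every density equals $10\eps$ and some vertex of $H$ has three earlier neighbours, then after two back-edges its candidate set has size roughly $(9\eps)^2|\sigma(\cdot)|<\eps|\sigma(\cdot)|$, so regularity can no longer be invoked for the third back-edge, yet no edge triggers your special case. The repair is to run the dichotomy at a threshold depending on $\gamma$ (and $h=v(H)$) rather than on $\eps$: if some edge has density below, say, $\gamma/2$, then both the true count (at most $d\big(\sigma(v),\sigma(w)\big)\prod_{W\in\cW}|W|$, using that single edge alone) and the main term $\prod_{e}d_e\prod_{W\in\cW}|W|$ are below $\gamma\prod_{W\in\cW}|W|$, so the conclusion is immediate; otherwise all densities are at least $\gamma/2$, and with $\eps\le(\gamma/4)^{h}/(4h^2)$ (your choice $\eps\le(\gamma/(4h))^{2h}$ suffices) every running candidate set stays above $\eps|\sigma(\cdot)|$, the per-step losses total $O(h^2\eps)\prod_{W\in\cW}|W|$, and your induction closes. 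With that adjustment the proposal is a complete proof of the lemma the paper imports as a black box.
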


Before we present the main lemma of this section we introduce several
definitions.

\begin{definition}[Bicanonical paths]
  Let $G$ be a graph and $k, \ell \in \N$ such that $\ell \geq k + 1$. For an
  $\ell$-tuple $\mbV = (V_1, \dotsc, V_\ell)$ of (not necessarily disjoint)
  subsets of $V(G)$, we say that a $(2k + 1)$-path $P = (v_1, \dotsc,
  v_{2\ell})$ in $G$ is $\mbV$-\emph{bicanonical} if $v_{2i - 1}, v_{2i} \in
  V_i$, for all $i \in [\ell]$.
\end{definition}

The next definition is the main notion of our proof strategy. Throughout, it is
of uttermost importance that all the $(2k + 1)$-paths we construct have
endpoints which are \emph{extendible}. Namely, we require each of the endpoints
to be such that their vertices have `large' common neighbourhoods into carefully
chosen sets. This enables us to further extend such paths, connect them with
other paths, and finally close a $(2k + 1)$-cycle (see
Figure~\ref{fig:extendible}).

\begin{definition}[Extendible tuples]\label{def:extendible-tuples}
  Let $G$ be a graph, $k \in \N$, $\rho \in (0, 1)$, and let $\mbV = (V_1,
  \dotsc, V_{k + 1})$ be a $(k + 1)$-tuple of subsets of $V(G)$. We say that a
  $(2k + 2)$-tuple $\mbv = (v_1, \dotsc, v_{2(k + 1)})$ of vertices of $G$ is
  \emph{$(\mbV, \rho)$-extendible} if
  \[
    \big| N_G(\mbv^{\geq 2i}, V_i) \big| \geq \rho|V_i|,
  \]
  for every $i \in [k + 1]$.
\end{definition}

\begin{figure}[!htbp]
  \centering
  \includegraphics[scale=1]{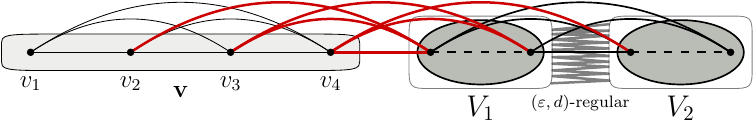}
  \caption{An example of a $(2k + 2)$-tuple $\mbv$ which is $(\mbV,
  \rho)$-extendible for $k = 1$. The grey blobs represent sets of common
  neighbours of vertices in $\mbv$. We depict an example of how to use the
  `extendible' property in order to extend a $4$-clique into a $3$-path of size
  $8$, given that $(V_1, V_2)$ is an $(\eps, d)$-regular pair.}
  \label{fig:extendible}
\end{figure}

\begin{lemma}\label{lem:path-lemma}
  For all $k, \ell \in \N$ and $\mu, d \in (0, 1)$, where $\ell \geq k + 1$,
  there exist positive constants $\eps(d, k, \ell)$, $\rho(d, k,
  \ell)$\footnote{It is crucial for the proof of Theorem~\ref{thm:main-result}
  that $\eps$ and $\rho$ do not depend on $\mu$.}, and $C(\mu, d, k, \ell)$ such
  that for every graph $\Gamma$ with $n$ vertices, the graph $G = \Gamma \cup
  \Gnp$ w.h.p.\ satisfies the following, provided that $p \geq C/n$.

  Let $V_1, \dotsc, V_\ell \subseteq V(G)$ be (not necessarily disjoint) subsets
  such that $|V_i| \geq \mu n$ for each $i \in [\ell]$, and $(V_i, V_j)$ is
  $(\eps, d, \Gamma)$-regular for all distinct $i, j \in [\ell]$, where $|i - j|
  \leq k$. Then there exists a $(V_1, \dotsc, V_\ell)$-bicanonical $(2k +
  1)$-path $P$ in $G$. Moreover, given additional subsets $Y_s, Y_t \subseteq
  V(G)$ of size $|Y_s|, |Y_t| \geq \mu n$ and such that:
  \begin{itemize}
    \item $(Y_s, V_i)$ is $(\eps, d, \Gamma)$-regular for every $i \in [k + 1]$,
      and
    \item $(Y_t, V_i)$ is $(\eps, d, \Gamma)$-regular for every $i \in [\ell -
      k, \ell]$,
  \end{itemize}
  one can find such a $(2k + 1)$-path $P$ connecting some $(2k + 2)$-cliques
  $\mbs$ and $\mbt$ with the following properties:
  \begin{enumerate}[label=(\textit{\roman*})]
    \item\label{s-extendible} $\rev(\mbs)$ is $(\mbS, \rho)$-extendible, where
      $\mbS = (Y_s, V_{k + 1}, \dotsc, V_2) \setminus V(P)$, and
    \item\label{t-extendible} $\mbt$ is $(\mbT, \rho)$-extendible, where $\mbT =
      (Y_t, V_{\ell - k}, \dotsc, V_{\ell - 1}) \setminus V(P)$.
  \end{enumerate}
\end{lemma}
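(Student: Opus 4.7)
The plan is to decompose the bicanonical $(2k+1)$-path into a `$\Gamma$-part' handled by the Counting Lemma and a `$\Gnp$-part' handled probabilistically. Observe first that in any $\mbV$-bicanonical $(2k+1)$-path $P = (v_1, \ldots, v_{2\ell})$, an edge $\{v_a, v_b\}$ of $P_{2\ell}^{2k+1}$ joins vertices of classes $V_{\lceil a/2\rceil}$ and $V_{\lceil b/2\rceil}$ at class-distance $m := |\lceil a/2\rceil - \lceil b/2\rceil| \in \{0, 1, \ldots, k+1\}$, and a direct enumeration shows that for $m \in [1,k]$ all four possible edges between the two classes are forced, whereas for $m = 0$ only the within-class edge $\{v_{2i-1}, v_{2i}\}$ and for $m = k+1$ only the `long skip' $\{v_{2i}, v_{2i+2k+1}\}$ is forced. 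The hypothesis on $(V_i, V_j)$ covers only $m \in [1, k]$, so the $\ell$ within-class edges together with the $\ell - k - 1$ long-skip edges---$2\ell - k - 1$ random edges in total---must be sourced from $\Gnp$.

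To prove the first assertion, let $H \subseteq P_{2\ell}^{2k+1}$ consist of exactly the $m \in [1, k]$ edges and take $\sigma(v_{2i-1}) = \sigma(v_{2i}) = V_i$. With $\eps$ chosen small enough in terms of $d, k, \ell$ via Lemma~\ref{lemma:counting}, there are $\Omega(n^{2\ell})$ tuples $\mbu \in V_1^2 \times \cdots \times V_\ell^2$ all of whose $H$-edges lie in $\Gamma$. Each such tuple extends to a $\mbV$-bicanonical $(2k+1)$-path in $G$ if and only if its $2\ell - k - 1$ remaining edges are present in $\Gnp$, which happens independently with probability $(C/n)^{2\ell - k - 1}$. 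Hence the expected number of bicanonical paths arising in this way is of order $C^{2\ell - k - 1}\mu^{2\ell}n^{k+1}$, and applying Janson's inequality to the corresponding counting variable---with a standard collision estimate exploiting that two distinct admissible tuples can only share a random edge when they coincide on the two vertex slots of some $V_i$---gives exponentially small failure probability for $C$ large enough.

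For the `moreover' clause, I would restrict to tuples whose endpoints are also extendible. Fixing $i \in [k+1]$, the condition on $\rev(\mbs)$ asks that the first $2k + 3 - 2i$ vertices of $P$, which by the bicanonical pattern populate the classes $V_1, V_1, V_2, V_2, \ldots, V_{k+1-i}, V_{k+1-i}, V_{k+2-i}$, have common $G$-neighbourhood of size at least $\rho|\mbS^i|$ in $\mbS^i \in \{Y_s, V_{k+1}, \ldots, V_2\}$. Since every pair between these classes and $\mbS^i$ is $(\eps, d, \Gamma)$-regular by the lemma's hypotheses together with the regularity assumptions on $Y_s$, iterated slicing via Proposition~\ref{prop:large-reg-inheritance} implies that all but an $O(\eps)$-fraction of tuples have common $\Gamma$-neighbourhood at least $(d - O(\eps))^{2k+1}|\mbS^i|$ in $\mbS^i$ (removing the $O(1)$ vertices of $V(P)$ contributes a negligible correction). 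Taking $\rho := (d/2)^{2k+1}$ and intersecting these conditions over $i \in [k+1]$ and over both $\mbs$ and $\mbt$ retains an $\Omega(1)$ fraction of the admissible tuples, and rerunning the Janson step on this subfamily produces the desired path.

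The main obstacle will be controlling Janson's collision term $\Delta$ precisely enough to obtain the required concentration: the long-skip random edges can appear simultaneously in many admissible tuples, so $\Delta$ must be estimated by stratifying pairs of tuples according to how many vertex coordinates they share and applying the Counting Lemma to each stratum separately, ensuring $\Delta$ stays comfortably below $\E[X]^2$.
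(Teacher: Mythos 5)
Your decomposition is exactly the paper's: the $\ell$ within-class edges plus the $\ell-k-1$ long skips form a union of $k+1$ vertex-disjoint short paths to be planted by $\Gnp$, while the remaining edges are supplied by $\Gamma$ via the Counting Lemma, and extendibility is obtained by restricting to tuples whose endpoints already have large common $\Gamma$-neighbourhoods (the paper does this by counting copies of an augmented graph $R^+$ with extension vertices and averaging, which is equivalent to your iterated slicing). The genuine gap is in the quantification of the probabilistic step. The lemma must hold, w.h.p., \emph{simultaneously for every} admissible choice of $V_1,\ldots,V_\ell,Y_s,Y_t$: in all later applications (Corollary~\ref{cor:connecting-lemma} and the absorbing lemmas) the sets fed into Lemma~\ref{lem:path-lemma} are obtained by deleting previously embedded paths and neighbourhoods, i.e.\ they depend on $\Gnp$ itself, so proving ``for each fixed choice of sets, w.h.p.\ a path exists'' is strictly weaker than what is needed. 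Your proposal fixes the sets and stops there. The paper instead shows that for each fixed admissible choice the failure probability is at most $2^{-(\ell+4)n}$ --- Janson for a family of $\zeta n^{2\ell}$ vertex-disjoint constant-size linear forests gives $2^{-c\zeta^2 n^2 p}$, which beats $2^{-(\ell+4)n}$ once $C$ is large --- and then union bounds over the at most $2^{(\ell+3)n}$ choices of subsets. Your computation can in principle deliver such a bound (since any shared subforest with $s$ edges spans at least $s+1$ vertices, one gets $\E[X]^2/\Delta = \Omega(\zeta^2 n^2 p) = \Omega(Cn)$), but you neither identify the target $2^{-\Theta(n)}$ with a controllable constant nor perform the union bound, so as written the argument does not prove the statement in the form the paper uses it.

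A secondary, fixable issue: Lemma~\ref{lemma:counting} requires a \emph{bijection} onto a family of \emph{pairwise disjoint} sets, so you cannot set $\sigma(v_{2i-1})=\sigma(v_{2i})=V_i$, all the more since the $V_i$ themselves need not be disjoint. The paper's remedy is to carve out pairwise disjoint subsets $W_{2i-1},W_{2i}\subseteq V_i$ (and $Y_s',Y_t',A_i$) of size $|V_i|/(3\ell)$, which remain regular by Proposition~\ref{prop:large-reg-inheritance}; this choice also guarantees for free that the common neighbourhoods certifying extendibility avoid $V(P)$. Relatedly, your collision description (that two tuples can share a random edge only by agreeing on the two slots of some $V_i$) is inaccurate for the long-skip edges and for overlapping classes, although the stratification by shared vertex coordinates that you propose would handle this correctly.
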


\begin{figure}[!htbp]
  \centering
  \includegraphics[scale=0.75]{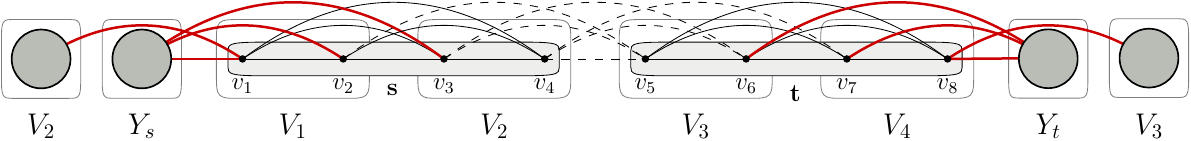}
  \caption{An example of a $(2k + 1)$-path $P_{2\ell}^{2k + 1}$ for $\ell = 4$
  and $k = 1$ given by Lemma~\ref{lem:path-lemma}. The grey blobs represent sets
  of common neighbours of respective vertices of $\mbs$ and $\mbt$ where the
  neighbourhood relations are given by thick red lines.}
  \label{fig:path-lemma}
\end{figure}

\begin{proof}
  We aim to show that a desired $(2k + 1)$-path exists with probability at least
  $1 - 2^{-(\ell + 4)n}$ for any particular (fixed) choice of subsets $V_1,
  \dotsc, V_\ell, Y_s, Y_t$ satisfying the stated requirements. If this is
  indeed the case, then the proof can be easily completed by the union bound.
  Without loss of generality we can assume that $Y_s$ and $Y_t$ are given, as
  otherwise we can artificially add two such sets of vertices to $\Gamma$ and
  connect them to all other vertices. There are at most $n$ choices for the size
  of each subset and at most $2^n$ choices for each subset, thus the probability
  that a desired $(2k + 1)$-path does not exist for at least one valid choice of
  subsets is at most $2^{(\ell + 3)n - (\ell + 4) n} = o(1)$. For the rest of
  the proof we consider one such valid choice of $V_1, \dotsc, V_\ell, Y_s, Y_t
  \subseteq V(G)$.

  Let $P_{2\ell}^{2k + 1}$ denote a $(2k + 1)$-path of size $2\ell$ with the
  vertex set $\mbv = (v_1, \dotsc, v_{2\ell})$. For each $i \in [k + 1]$, let
  $P_i \subseteq P_{2\ell}^{2k + 1}$ be a path (ordinary path) given by the
  following sequence of vertices (see Figure~\ref{fig:skeletons}):
  \[
    \mbv_i = (v_{2i - 1}, v_{2i}, v_{2(i + (k + 1)) - 1}, v_{2(i + (k + 1))},
    \dotsc, v_{2(i + j(k + 1)) - 1}, v_{2(i + j(k + 1))}, \dots),
  \]
  where $0 \le j \le \floor{(\ell - i)/(k + 1)}$, and set $P^\star = \bigcup_{i
  \in [k + 1]} P_i$. Note that $P^\star$ is simply a collection of $k + 1$
  vertex-disjoint paths.

  \begin{figure}[!htbp]
    \centering
    \includegraphics[scale=0.6]{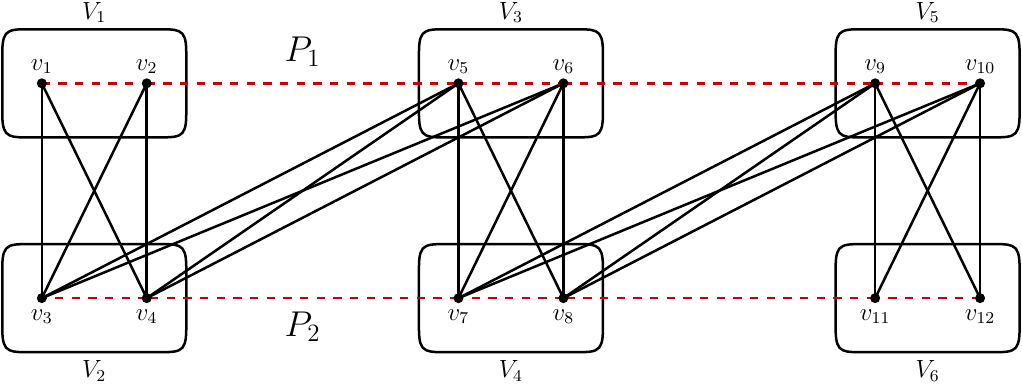}
    \caption{An example of a $(2k + 1)$-path $P_{2\ell}^{2k + 1}$ for $\ell = 6$
    and $k = 1$. The paths $P_i$ are given by dashed red lines and come from
    $\Gnp$. The edges of $R$ are given by thick solid lines and come from
    $\Gamma$.}
    \label{fig:skeletons}
  \end{figure}

  We aim to find a desired $(V_1, \dotsc, V_\ell)$-bicanonical $(2k + 1)$-path
  $P \subseteq G$ such that the edges belonging to a copy of $R = P_{2\ell}^{2k
  + 1} \setminus E(P^\star)$ come from $\Gamma$, and the edges belonging to
  copies of the paths $P_1, \dotsc, P_{k + 1}$ come from $\Gnp$ (see
  Figure~\ref{fig:skeletons}). This is achieved in two steps: in the first step,
  using the Counting Lemma (Lemma~\ref{lemma:counting}), we show that there are
  many embeddings $\phi \colon V(R) \to V(\Gamma)$ such that $\phi(v_{2i - 1}),
  \phi(v_{2i}) \in V_i$ for each $i \in [\ell]$, and \ref{s-extendible} and
  \ref{t-extendible} are satisfied for naturally chosen $\mbs = (\phi(v_1),
  \dotsc, \phi(v_{2(k + 1)}))$ and $\mbt = (\phi(v_{2(\ell - k) - 1}), \dotsc,
  \phi(v_{2\ell}))$; in the second step we apply (a corollary of) Janson's
  inequality to conclude that with probability at least $1 - 2^{-(\ell + 4)n}$
  one of these copies of $R$ is completed to a $(2k + 1)$-path using edges from
  $\Gnp$. As every such $(2k + 1)$-path satisfies \ref{s-extendible} and
  \ref{t-extendible}, this concludes the proof.

  To this end, choose pairwise disjoint subsets
  \begin{itemize}
    \item $W_{2i - 1}$, $W_{2i} \subseteq V_i$ of size $|V_i|/(4\ell)$ for all
      $i \in [\ell]$,
    \item $Y_s' \subseteq Y_s$ of size $|Y_s|/(4\ell)$ and $Y_t' \subseteq Y_t$
      of size $|Y_t|/(4\ell)$,
    \item $A_i \subseteq V_i$ of size $|A_i| = |V_i|/(4\ell)$, for all $i \in
      [2, k + 1]$, and
    \item $B_i \subseteq V_i$ of size $|B_i| = |V_i|/(4\ell)$, for all $i \in
      [\ell - k, \ell - 1]$.
  \end{itemize}
  Let $\cW$ be the family of all the obtained subsets. Finally, let $R^+$ be an
  auxiliary graph obtained by adding the vertices $y_s$, $y_t$, $a_i$ for $i \in
  [2, k + 1]$, and $b_i$ for $i \in [\ell - k, \ell - 1]$ to $R$, together with
  the following edges (see Figure~\ref{fig:graph-r-plus}):
  \begin{itemize}
    \item $\{y_s, v_i\}$ for $i \in [2(k + 1) - 1]$, and $\{y_t, v_i\}$ for $i
      \in [2(\ell - k), 2\ell]$,
    \item $\{a_i, v_j\}$ for $i \in [2, k + 1]$ and $j \in [2(i - 1) - 1]$, and
    \item $\{b_i, v_j\}$ for $i \in [\ell - k, \ell - 1]$ and $j \in [2(i + 1),
      2\ell]$.
  \end{itemize}

  \begin{figure}[!htbp]
    \centering
    \includegraphics[scale=0.8]{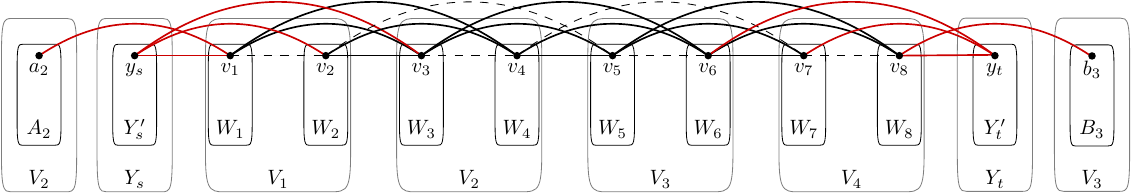}
    \caption{An example of a graph $R^+$ for $\ell = 4$ and $k = 1$. The thick
    solid lines represent the edges of $R^+$ where the red lines represent the
    edges incident to newly added vertices. The dashed lines in a copy of $R$ in
    $G$ come from $\Gnp$ and are depicted just to put things into the right
    perspective.}
    \label{fig:graph-r-plus}
  \end{figure}

  It is important to notice that there are no edges among newly added vertices.
  With this at hand we apply the Counting Lemma (Lemma~\ref{lemma:counting})
  with $R^+$ (as $H$), some $\gamma$ to be specified later, and $\sigma \colon
  V(R^+) \to \cW$ defined as
  \[
    \sigma(v_i) = W_i, \quad \sigma(a_i) = A_i, \quad \sigma(b_i) = B_i, \quad
    \sigma(y_s) = Y_s', \quad \sigma(y_t) = Y_t'.
  \]
  We briefly justify that we indeed may do so. Note that for every $\{v, w\} \in
  E(R^+)$ we have $\sigma(v) \subseteq V'$ and $|\sigma(v)| = |V'|/(4\ell)$, and
  $\sigma(w) \subseteq W'$ and $|\sigma(w)| = |W'|/(4\ell)$, for some disjoint
  subsets $V'$ and $W'$ which are $(\eps, d, \Gamma)$-regular. It follows from
  Proposition~\ref{prop:large-reg-inheritance} that $(\sigma(v), \sigma(w))$ is
  a $4\ell\eps$-regular pair with density at least $d - \eps \geq d/2$, for
  every $\{v, w\} \in E(R^+)$ and hence $\sigma$ satisfies the condition of
  Lemma~\ref{lemma:counting} for sufficiently small $\eps = \eps(\gamma, R^+)$.
  Therefore, there are at least
  \[
    \Big( \prod_{1 \leq i \leq 2\ell} |W_i| \Big) \Big( \prod_{2 \leq i \leq k+1} |A_i|
    \Big) \Big( \prod_{\ell-k \leq i \leq \ell-1} |B_i| \Big) \cdot |Y_s'||Y_t'|
    \cdot \Big(\frac{d}{2} - \gamma \Big)^{e(R^+)} \geq \Big( \prod_{W \in \cW}
    |W| \Big) \Big( \frac{d}{4} \Big)^{e(R^+)}
  \]
  copies of $R^+$ in $\Gamma$ which respect $\sigma$, for $\gamma = d/4$. By
  counting the number of extensions, a simple averaging argument shows that, for
  $\rho = \frac{1}{8\ell} (\frac{1}{4\ell})^{2k + 2} (\frac{d}{4})^{e(R^+)}$ and
  some $\zeta = \zeta(\mu, \rho, d, \ell)$, there are at least $\zeta n^{2\ell}$
  embeddings $\phi \colon V(R) \to V(\Gamma)$ which respect
  $\sigma_{\restriction V(R)}$ and, moreover, $\mbs = (\phi(v_1), \dotsc,
  \phi(v_{2(k + 1)}))$ satisfies \ref{s-extendible} and $\mbt = (\phi(v_{2(\ell
  - k) - 1}), \dotsc, \phi(v_{2\ell}))$ satisfies \ref{t-extendible}. Let us
  denote the family of all such embeddings of $R$ with $\Phi$. This finishes the
  first step of the proof.

  We now show that with probability at least $1 - 2^{-(\ell + 4)n}$ there exists
  $\phi \in \Phi$ such that $\phi(\mbv)$ forms a $(2k + 1)$-path in $\Gamma \cup
  \Gnp$. For each $\phi \in \Phi$, let $P_\phi \subseteq K_n$ be a graph formed
  by $k + 1$ disjoint paths given by $\phi(\mbv_i)$ for $i \in [k + 1]$. In
  other words, $P_\phi$ is a copy of $P^\star$ in $K_n$ given by $\phi(\mbv)$.
  Thus, if $P_\phi \subseteq \Gnp$ then $\phi(\mbv)$ forms a desired $(2k +
  1)$-path. By a corollary of Janson's
  inequality~\cite[Theorem~3.2]{dudek2020powers} we have
  \[
    \Pr[P_\phi \not\subseteq \Gnp \text{ for all } \phi \in \Phi] \le 2^{-c
    \cdot \zeta^2 n^2p},
  \]
  for some constant $c = c(P^\star)$, which is clearly at most $2^{-(\ell +
  4)n}$ for sufficiently large constant $C$.
\end{proof}

Almost immediately we get the following corollary which allows us to connect
several pairs of $(2k + 2)$-cliques by disjoint $(2k + 1)$-paths going through
some regular tuples. Note that, in contrast to Lemma~\ref{lem:path-lemma}, the
cliques $\mbs_i$ and $\mbt_i$ are given as a part of the input and are used in
an `opposite way'; that is we have that $\mbs_i$'s and $\rev(\mbt_i)$'s are
extendible and we want to construct a $(2k + 1)$-path that connects them. In
particular, we need not worry about whether $\rev(\mbs_i)$'s and $\mbt_i$'s are
extendible in order to connect them with other structures.

\begin{corollary}\label{cor:connecting-lemma}
  For all $k \in \N$ and $\mu, \rho, d \in (0, 1)$, there exist positive
  constants $\eps(\rho, d, k)$, $\delta(\mu, \rho)$, and $C(\mu, \rho, d, k)$
  such that for every graph $\Gamma$ with $n$ vertices the graph $G = \Gamma
  \cup \Gnp$ w.h.p.\ satisfies the following, provided that $p \geq C/n$.

  Let $1 \leq t \leq \delta n$ and let $\{\mbV_i\}_{i \in [t]}$ be a family of
  $(2k + 4)$-tuples $\mbV_i = (V_i^1, \dotsc, V_i^{2k + 4})$, where $V_i^j
  \subseteq V(G)$ and $|V_i^j| \geq \mu n$ for all $i \in [t]$ and $j \in [2k +
  4]$, and $(V_i^{j_1}, V_i^{j_2})$ is $(\eps, d, \Gamma)$-regular for all
  distinct $j_1, j_2 \in [2k + 4]$ with $|j_1 - j_2| \leq k + 1$. For every
  family of disjoint $(2k + 2)$-cliques $\{\mbs_i, \mbt_i\}_{i \in [t]}$ in $G$
  such that
  \begin{itemize}
    \item $\mbs_i$ is $(\mbV_i^{\leq k + 1}, \rho)$-extendible, and
    \item $\rev(\mbt_i)$ is $(\rev(\mbV_i)^{\leq k + 1}, \rho)$-extendible for
      all $i \in [t]$,
  \end{itemize}
  there exists a collection of disjoint $(2k + 1)$-paths $\{P_i\}_{i \in [t]}$
  in $G$ such that each $P_i$ connects $\mbs_i$ to $\mbt_i$ and is
  $\mbV_i$-bicanonical.
\end{corollary}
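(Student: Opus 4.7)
The plan is to construct the paths $P_1, \ldots, P_t$ iteratively, one pair at a time, re-using the Counting-Lemma-plus-Janson argument of Lemma~\ref{lem:path-lemma} but with the endpoints of the bicanonical path pinned to $\mbs_i$ and $\mbt_i$ instead of being left free.

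\textbf{Iteration and bookkeeping.} First I would order the pairs arbitrarily, and at step $i$ set $U = \bigcup_{j<i} V(P_j)$. Since each prior $P_j$ has $2(2k+4)$ vertices, $|U| \leq 2(2k+4)\delta n$. I plan to choose $\delta = \delta(\mu, \rho, k)$ small enough that $|\mbV_i^j \setminus U| \geq (\mu/2) n$ for every $j \in [2k+4]$. Proposition~\ref{prop:large-reg-inheritance} will then ensure that every pair $(\mbV_i^{j_1} \setminus U, \mbV_i^{j_2} \setminus U)$ with $|j_1 - j_2| \leq k+1$ remains $(2\eps, d/2, \Gamma)$-regular. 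The same smallness of $\delta$ will preserve the extendibility of $\mbs_i$ and $\rev(\mbt_i)$ into the reduced tuples with parameter $\rho/2$, because each common neighbourhood $N_G(\mbs_i^{\geq 2j}, \mbV_i^j)$ shrinks by at most $|U|$ vertices, tiny compared to $\rho \mu n/2$.

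\textbf{Finding a single $P_i$.} The goal at each step is a $(\mbV_i \setminus U)$-bicanonical $(2k+1)$-path connecting $\mbs_i$ to $\mbt_i$. I would adapt the proof of Lemma~\ref{lem:path-lemma} with $\ell = 2k+4$: fix the first $2(k+1)$ and last $2(k+1)$ skeleton vertices to be $\mbs_i$ and $\mbt_i$, and only let the Counting Lemma choose the four middle vertices (in bicanonical positions $k+2$ and $k+3$). The hypotheses that $\mbs_i$ is $(\mbV_i^{\leq k+1}, \rho)$-extendible and $\rev(\mbt_i)$ is $(\rev(\mbV_i)^{\leq k+1}, \rho)$-extendible provide exactly the common-neighbourhood bounds that in the proof of Lemma~\ref{lem:path-lemma} were certified by the auxiliary graph $R^+$'s extension vertices $y_s, y_t, a_j$. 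With these inputs the Counting Lemma still produces $\Omega(n^4)$ embeddings of the middle vertices in $\Gamma$ that respect the bicanonical structure, and Janson's inequality then guarantees that w.h.p.\ at least one of these $\Gamma$-copies is completed into a $(2k+1)$-path by the edges of $\Gnp$ for $p \geq C/n$, provided $C$ is large enough.

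\textbf{Main obstacle.} The delicate point will be to verify that fixing the endpoints to $\mbs_i$ and $\mbt_i$ does not collapse the embedding count coming out of the Counting Lemma. This is exactly what the extendibility hypotheses are designed to guarantee: the sets $N_G(\mbs_i^{\geq 2j}, \mbV_i^j)$ have size $\Omega(\mu n)$ and, by Proposition~\ref{prop:large-reg-inheritance}, inherit the regularity needed with the remaining partition classes of $\mbV_i$, so they can serve as anchor subsets in the Counting Lemma. Everything else --- the disjointness across iterations, the choice of constants $\eps, \delta, C$, and the union bound over the $t$ Janson applications --- should be routine bookkeeping, directly analogous to the final step in the proof of Lemma~\ref{lem:path-lemma}.
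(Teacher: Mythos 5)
Your iteration and bookkeeping are fine, but the core step --- connecting $\mbs_i$ to $\mbt_i$ through only four new vertices placed in the classes $V_i^{k+2}$ and $V_i^{k+3}$ --- does not work, for two separate reasons. First, the extendibility hypotheses give large common neighbourhoods of the suffixes of $\mbs_i$ only inside $V_i^1, \ldots, V_i^{k+1}$ (and of $\rev(\mbt_i)$ only inside $V_i^{2k+4}, \ldots, V_i^{k+4}$); they say nothing about common neighbourhoods in $V_i^{k+2}$ or $V_i^{k+3}$, and $\eps$-regularity gives no control whatsoever over the neighbourhood of a prescribed bounded set of vertices, so the Counting Lemma cannot be anchored on the fixed cliques to certify the adjacencies your four middle vertices would need to $\mbs_i$ and $\mbt_i$. (This is exactly why, in the proof of Lemma~\ref{lem:path-lemma}, extendibility of the endpoints is obtained via the auxiliary vertices $y_s, y_t, a_i$ of $R^+$ and an averaging argument over many embeddings, rather than being demanded for prescribed vertices.) Second, and independently, for $k \geq 2$ a $(2k+1)$-path whose endpoints $\mbs_i$ and $\mbt_i$ are separated by only four interior vertices forces direct edges between $\mbs_i$ and $\mbt_i$ (the last vertex of $\mbs_i$ and the first vertex of $\mbt_i$ would be at distance $5 \leq 2k+1$ along the path), and no such edges are guaranteed --- the whole point of the corollary is to connect cliques with no a priori relation to each other. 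So the ``$\mbV_i$-bicanonical'' clause has to be read as referring to the interior of $P_i$, which consists of $2(2k+4)$ new vertices, not four.

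The paper's proof does precisely this: at step $z+1$ it sets $N_s^j = N_\Gamma(\mbs_{z+1}^{\geq 2j}, \mbV_{z+1}^j)$ and $N_t^j = N_\Gamma(\rev(\mbt_{z+1})^{\geq 2j}, \rev(\mbV_{z+1})^j)$, each of size at least $\rho\mu n$ by extendibility, removes the previously used vertices, checks via Proposition~\ref{prop:large-reg-inheritance} that the $(2k+4)$-tuple $\big( N_s^1, \ldots, N_s^{k+1}, \mbV_{z+1}^{k+2}, \rev(\mbV_{z+1})^{k+2}, N_t^{k+1}, \ldots, N_t^1 \big)$ is still regular with comparable density, and applies Lemma~\ref{lem:path-lemma} as a black box (with $Y_s = Y_t = \emptyset$) to get a bicanonical $(2k+1)$-path $Q$ on $2(2k+4)$ new vertices inside these common neighbourhoods; since each new vertex lies in the common neighbourhood of the appropriate suffix of $\mbs_{z+1}$ (resp.\ of $\rev(\mbt_{z+1})$), the concatenation of $\mbs_{z+1}$, $Q$, and $\mbt_{z+1}$ is automatically a $(2k+1)$-path connecting the two cliques, with no counting anchored on fixed vertices and with the interior revisiting the classes $V_{z+1}^1, \ldots, V_{z+1}^{k+1}$ (allowed, since the tuples need not be disjoint). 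One further remark: you should not re-run Janson at each of the $t$ steps, because the sets you feed in at step $i$ depend on $\Gnp$ through the earlier paths; Lemma~\ref{lem:path-lemma} is stated (and proved, via a union bound over all possible subsets) so that its conclusion holds simultaneously for every valid choice of subsets, and the iteration is then deterministic on that single high-probability event. With your middle section replaced by this application of Lemma~\ref{lem:path-lemma} to the tuple of common neighbourhoods, the rest of your bookkeeping goes through.
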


\begin{proof}
  Given $k$, $\mu$, $\rho$, and $d$, set $\ell = 2k + 4$ and let
  \[
    \delta = \rho\mu/4, \quad \eps' = \eps_{\ref{lem:path-lemma}}(d/2, k, \ell),
    \quad \eps = \min{\{ d/2, \eps'\rho/2 \}}, \quad \text{and} \quad C =
    C_{\ref{lem:path-lemma}}(\rho\mu/2, d/2, k, \ell).
  \]
  Assume there exists a collection $\{P_i\}_{i \in [z]}$, for some $z < t$, of
  $(2k + 1)$-paths as stated. We show that no matter how such paths are
  constructed, as long as they are $\mbV_i$-bicanonical, we can still find a
  desired $(2k + 1)$-path $P_{z + 1}$. Let
  \[
    N_{s_{z + 1}}^j := N_\Gamma \big( \mbs_{z + 1}^{\geq 2j}, \mbV_{z + 1}^j
    \big) \quad \text{and} \quad N_{t_{z + 1}}^j := N_\Gamma \big( \rev(\mbt_{z
    + 1})^{\geq 2j}, \rev(\mbV_{z + 1})^j \big),
  \]
  for all $j \in [k + 1]$, and note that $|N_{s_{z + 1}}^j| \geq \rho|\mbV_{z +
  1}^j|$ and $|N_{t_{z + 1}}^j| \geq \rho|\rev(\mbV_{z + 1})^j|$. The $(2k +
  1)$-path $P_{z + 1}$ is obtained by applying Lemma~\ref{lem:path-lemma} with
  $\ell$, $\rho\mu/2$ (as $\mu$), $d/2$ (as $d$),
  \[
    \big( N_{s_{z + 1}}^1, \dotsc, N_{s_{z + 1}}^{k + 1}, \mbV_{z + 1}^{k + 2},
    \rev(\mbV_{z + 1})^{k + 2}, N_{t_{z + 1}}^{k + 1}, \dotsc, N_{t_{z + 1}}^1
    \big) \setminus \bigcup_{i \in [z]} V(P_i),
  \]
  (as $V_1, \dotsc, V_\ell$), and $Y_s = Y_t = \varnothing$.

  \begin{assumptions}[Lemma~\ref{lem:path-lemma}]
    Since $t \leq \delta n$ and all $P_i$'s are $\mbV_i$-bicanonical, it follows
    that
    \[
      \Big| N_{s_{z + 1}}^j \setminus \bigcup_{i \in [z]} V(P_i) \Big| \geq \rho
      |V_{z + 1}^j| - 2\delta n \geq (\rho/2)|V_{z + 1}^j| \geq (\rho\mu/2)n,
    \]
    and similarly (with room to spare) for $N_{t_{z + 1}}^j$, $\mbV_{z + 1}^{k +
    2}$, and $\rev(\mbV_{z + 1})^{k + 2}$, due to our choice of constants.
    Therefore, every set used as $V_i$ is a subset of some $V_{z + 1}^j$ of size
    at least $(\rho/2)|V_{z + 1}^j|$. As $(V_{z + 1}^{j_1}, V_{z + 1}^{j_2})$ is
    $(\eps, d, \Gamma)$-regular for distinct $j_1, j_2 \in [2k + 4]$ with $|j_1
    - j_2| \leq k + 1$, we have by Proposition~\ref{prop:large-reg-inheritance}
    that the pair $(V_{j_1}, V_{j_2})$ is $(\eps', d/2, \Gamma)$-regular.
  \end{assumptions}

  This completes the proof.
\end{proof}

\section{The Absorbing-Covering Lemma}

Recall that our proof strategy consists of two steps:
\begin{enumerate*}[label=(\textit{\roman*})]
  \item find an $X$-absorbing path $P$ connecting some $\mbs$ to $\mbt$;
  \item construct a long $(2k + 1)$-path from $\mbt$ to $\mbs$ which contains
    all the vertices of $V(G) \setminus (X \cup V(P))$ and possibly some of $X$.
\end{enumerate*}
In this section we present a lemma which captures the first step and a part of
the second step. Namely, given a set $X$ it provides us with a $(2k + 1)$-path
$P$ which is $X$-absorbing and covers \emph{most} of the vertices. In addition,
the endpoints of $P$ are extendible, which gives us some flexibility as to how
to extend it or combine it with another path in order to close a cycle.

\begin{lemma}[Absorbing-Covering Lemma]\label{lem:absorbing-lemma}
  For all $k, t \in \N$ and $\alpha, \gamma, \mu, d \in (0, 1)$, where $t \geq
  2/\alpha^2$ and $(k + 1) \mid t$, there exist positive constants $\xi(\alpha,
  k)$\footnote{It is crucial for the proof of Theorem~\ref{thm:main-result} that
  $\xi$ does not depend on $\gamma$.}, $\eps(\alpha, \gamma, d, k)$,
  $\rho(\alpha, \gamma, d, k)$, and $C(\alpha, \gamma, \mu, d, k, t)$ such that
  for every graph $\Gamma$ with $n$ vertices the graph $G = \Gamma \cup \Gnp$
  w.h.p.\ satisfies the following, provided that $p \geq C/n$.

  Let $X, W \subseteq V(G)$ be disjoint sets of vertices such that $|W| \geq \mu
  n$, $|X| \leq \xi|W|$, and $\deg_\Gamma(x, W) \geq (\frac{k}{k + 1} +
  \alpha)|W|$ for every $x \in X$. Suppose $(W_i)_{i \in [t]}$ is an
  equipartition of $W$ such that its $(\eps, d, \Gamma)$-reduced graph $R$
  satisfies $\delta(R) \geq (\frac{k}{k + 1} + \alpha)t$ and $(1, \dotsc, t)$ is
  a $k$-cycle in $R$. Then there exists an $X$-absorbing $(2k + 1)$-path $P$
  connecting some $\mbs$ to $\mbt$ with the following properties:
  \begin{enumerate}[label=(\textit{\roman*})]
    \item\label{A-leftover} $|W_i \setminus V(P)| = (1 \pm 0.1)\gamma|W_i|$, for
      every $i \in [\ell]$,
    \item\label{A-cliques-s} $\rev(\mbs)$ is $(\mbW_s, \rho)$-extendible, where
      $\mbW_s = (W_z, W_{k + 1}, \dotsc, W_2) \setminus V(P)$, and
    \item\label{A-cliques-t} $\mbt$ is $(\mbW_t, \rho)$-extendible, where
      $\mbW_t = (W_z, W_1, \dotsc, W_k) \setminus V(P)$,
  \end{enumerate}
  for some $z \in \bigcap_{i \in [k + 1]} N_R(i)$.
\end{lemma}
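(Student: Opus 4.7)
We construct $P$ in three phases: (i) for each $x \in X$, build a small absorbing gadget $A_x$; (ii) glue the gadgets into a single $(2k+1)$-path backbone via the connecting lemma; (iii) extend the backbone to a covering path. Throughout, we anchor the bicanonical structure on the $k$-cycle $(1, \ldots, t) \subseteq R$ and on a common neighbour $z \in \bigcap_{i \in [k+1]} N_R(i)$; this intersection has size at least $(k+1)\alpha t$ by inclusion--exclusion with $\delta(R) \geq (\tfrac{k}{k+1}+\alpha)t$, using that $\{1, \ldots, k+1\}$ is a clique in $R$.

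For each $x \in X$, the gadget $A_x$ is a bicanonical $(2k+1)$-path on $4k+4$ vertices whose middle $4k+2$ vertices all lie in $N_\Gamma(x)$; inserting $x$ between positions $2k+2$ and $2k+3$ of $A_x$ then gives a $(2k+1)$-path on $V(A_x) \cup \{x\}$ with the same endpoints, so $A_x$ is $\{x\}$-absorbing. To locate the gadget, fix a small $\eta = \eta(\alpha, k) > 0$ and set $J_x := \{i \in [t] : \deg_\Gamma(x, W_i) \geq \eta|W_i|\}$. A double counting argument using $\deg_\Gamma(x, W) \geq (\tfrac{k}{k+1}+\alpha)|W|$ yields $|J_x| \geq (\tfrac{k}{k+1} + \alpha - \eta)t/(1-\eta)$, and a short computation shows that for $\eta$ small enough the induced subgraph $R[J_x]$ has minimum degree strictly exceeding the Tur\'an threshold $\tfrac{k-1}{k}|J_x|$ by a positive constant proportional to $\alpha$. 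Hence $R[J_x]$ contains a $(k+1)$-clique $\{i_1, \ldots, i_{k+1}\}$; cycling through it produces the length-$(2k+2)$ $k$-walk $(i_1, \ldots, i_{k+1}, i_1, \ldots, i_{k+1})$ in $R$. Applying Lemma~\ref{lem:path-lemma} to the tuple $(Y_1, \ldots, Y_{k+1}, Y_1, \ldots, Y_{k+1})$, where $Y_\ell := N_\Gamma(x, W_{i_\ell})$ is $(\eps', d/2)$-regular against $Y_{\ell'}$ ($\ell \neq \ell'$) by Proposition~\ref{prop:large-reg-inheritance}, produces $A_x$ with endpoints extendible in the sense required by Corollary~\ref{cor:connecting-lemma}.

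We pick the $A_x$'s greedily so they are pairwise vertex-disjoint (feasible since each $A_x$ has $O_k(1)$ size and $|X| \leq \xi|W|$ with $\xi$ taken small enough that the forbidden vertex count never invalidates the above construction). Then Corollary~\ref{cor:connecting-lemma} glues them into a single $(2k+1)$-path $P^{\circ}$, routing each connection through a $(2k+4)$-tuple of partition classes drawn from the $k$-cycle and avoiding already-used vertices. To extend $P^{\circ}$ into a covering path, we iterate Lemma~\ref{lem:path-lemma} on the leftover sets $W_i \setminus V(P^{\circ})$ along consecutive blocks of $(1, \ldots, t)$ and glue the successive bicanonical segments via Corollary~\ref{cor:connecting-lemma}; scheduling each class so that exactly a $(1 \pm 0.1)\gamma|W_i|$-size residue remains delivers property~\ref{A-leftover}. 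Finally we arrange the first and last segments so that $\mbs$ occupies $(W_1, \ldots, W_{k+1})$ and $\mbt$ occupies $(W_{t-k}, \ldots, W_t)$; the extendibilities~\ref{A-cliques-s} and~\ref{A-cliques-t} then follow from $z \in \bigcap_{i \in [k+1]} N_R(i)$ combined with the $k$-cycle regularity and Proposition~\ref{prop:large-reg-inheritance}, which together supply common neighbourhoods of size $\geq \rho|W_i|$ of the appropriate prefixes of $\mbs$ (respectively $\mbt$) inside $W_z, W_{k+1}, \ldots, W_2$ (respectively $W_z, W_1, \ldots, W_k$).

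The main obstacle is the absorber construction: when $\alpha$ is small, $J_x$ may contain no $2k+2$ consecutive indices along the $k$-cycle, so one cannot naively place $A_x$ along $(W_i, W_{i+1}, \ldots, W_{i+2k+1})$. The workaround --- cycling through a $(k+1)$-clique of $R[J_x]$ whose existence is forced by the Tur\'an-type threshold --- is the technical heart of the lemma, and is exactly where the strict inequality $\delta(R) > kt/(k+1)$ (by $\alpha t$) is genuinely used.
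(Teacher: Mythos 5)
Your overall architecture (per-vertex absorbing gadgets built from Lemma~\ref{lem:path-lemma} inside the $\Gamma$-neighbourhood of $x$, glued by Corollary~\ref{cor:connecting-lemma}, followed by a covering phase and an endpoint anchored at a common neighbour class $W_z$) matches the paper's strategy in spirit, but the step you call the technical heart is exactly where the proposal breaks. You place each gadget $A_x$ on an \emph{arbitrary} $(k+1)$-clique of $R[J_x]$ obtained from a Tur\'an-type argument, and then ask Corollary~\ref{cor:connecting-lemma} to join consecutive gadgets. That corollary requires, for each connection, a $(2k+4)$-tuple whose first $k+1$ coordinates carry the extendibility of one endpoint, whose last $k+1$ (reversed) carry the other, and in which \emph{every} pair of coordinates at distance at most $k+1$ is $(\eps,d,\Gamma)$-regular. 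Since the extendibility of $A_x$'s endpoints is only into subsets of the classes of $x$'s own clique, the connecting tuple is forced to interleave classes of two unrelated cliques, and nothing guarantees any regular pair (let alone all the required ones) between them; "routing through classes drawn from the $k$-cycle" does not help, because the gadget cliques need not be adjacent in $R$ to any window of the cycle. Moreover, the premise that forces you into this detour is false: you only need $k+1$ (not $2k+2$) classes, used twice, and a short averaging argument from $\deg_\Gamma(x,W)\geq(\tfrac{k}{k+1}+\alpha)|W|$ shows that for each $x$ at least an $\alpha/2$-fraction of the consecutive blocks $\mbK_j=(W_{(j-1)(k+1)+1},\ldots,W_{j(k+1)})$ of the $k$-cycle lie entirely in $J_x$. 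This is precisely what the paper does: it assigns each $x$ to such a block (spreading the $x$'s evenly over blocks), builds all absorbers block-locally via a Local Absorbing-Covering Lemma, and then the connections run between \emph{consecutive} blocks along the $k$-cycle, where the needed regularities are available. The Tur\'an clique is both unnecessary and fatal to the gluing step.

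Two further gaps would remain even if the connections were repaired. First, balance: $\xi$ may depend only on $\alpha$ and $k$, so if many gadgets land on the same few classes (your greedy choice allows this), their $O_k(|X|)$ vertices can exhaust classes of size $|W|/t$ for large $t$, and property~\ref{A-leftover} (every class left with $(1\pm0.1)\gamma|W_i|$) cannot be rescued by "scheduling" alone; the paper's even assignment of $X$ to blocks, together with a per-block covering that leaves exactly a $\gamma'$-fraction of each class, is what delivers this. You also never protect the witnessing common neighbourhoods of earlier endpoints from being consumed by later segments (the paper reserves sets $N_{s_j}^i,N_{t_j}^i$ inside the forbidden set $Q$), without which the extendibility invariants silently degrade. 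Second, your final placement of $\mbt$ in $(W_{t-k},\ldots,W_t)$ cannot yield~\ref{A-cliques-t}: $z$ is only a common $R$-neighbour of $1,\ldots,k+1$, so there is no reason the vertices of a clique sitting in $W_{t-k},\ldots,W_t$ have large common neighbourhood in $W_z$, and a common neighbour of all of $1,\ldots,k+1,t-k,\ldots,t$ is not guaranteed by $\delta(R)\geq(\tfrac{k}{k+1}+\alpha)t$. The paper instead builds the terminal clique $\mbt$ by one more application of Lemma~\ref{lem:path-lemma} inside $(W_1,\ldots,W_{k+1})$ with $W_z$ as $Y_t$, and closes the chain of blocks back to it.
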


Before we prove the lemma, we present a `local version' of the lemma which
serves as the central technical piece towards that goal.

\begin{lemma}[Local Absorbing-Covering Lemma]\label{lem:local-absorbing-lemma}
  For all $k \in \N$ and $\alpha, \gamma, \lambda, \mu, d \in (0, 1/2)$, there
  exist positive constants $\delta(\alpha)$, $\xi(\alpha)$, $\eps(\alpha,
  \gamma, \lambda, d, k)$, $\rho(\alpha, \gamma, \lambda, d, k)$, and $C(\alpha,
  \gamma, \lambda, \mu, d, k)$ such that for every graph $\Gamma$ with $n$
  vertices the graph $G = \Gamma \cup \Gnp$ w.h.p.\ satisfies the following,
  provided that $p \geq C/n$.

  Let $V_1, \dotsc, V_{k + 1}, X, Y \subseteq V(G)$ be disjoint sets of vertices
  satisfying
  \begin{itemize}
    \item $|V_1| = \dotsb = |V_{k + 1}| = |Y| \geq \mu n$ and $|X| \leq
      \xi|V_i|$,
    \item for every $x \in X$ and $i \in [k + 1]$, we have $\deg_\Gamma(x, V_i)
      \geq \alpha|V_i|$, and
    \item $(V_i, V_j)$, $(V_i, Y)$ are $(\eps, d, \Gamma)$-regular for all
      distinct $i, j \in [k + 1]$.
  \end{itemize}

  Then for every $Q \subseteq V(G)$ such that $|Y \setminus Q| \geq \lambda|Y|$
  and $|V_i \cap Q| \leq \delta|V_i|$ for all $i \in [k + 1]$, there exists an
  $X$-absorbing $(2k + 1)$-path $P$ with $V(P) \subseteq (V_1 \cup \dotsb \cup
  V_{k + 1} \cup Y) \setminus Q$ connecting some $\mbs$ to $\mbt$ with the
  following properties:
  \begin{enumerate}[label=(\textit{\roman*})]
    \item\label{C-leftover} $|V_i \setminus (V(P) \cup Q)| = (1 \pm
      0.001)\gamma|V_i|$, for every $i \in [k + 1]$,
    \item\label{C-small-usage} $|Y \cap V(P)| \leq \lambda |Y|$,
    \item\label{C-cliques-s} $\rev(\mbs)$ is $(\mbS, \rho)$-extendible, where
      $\mbS = (Y, V_{k + 1}, \dotsc, V_2) \setminus (V(P) \cup Q)$, and
    \item\label{C-cliques-t} $\mbt$ is $(\mbT, \rho)$-extendible, where $\mbT =
      (Y, V_1, \dotsc, V_k) \setminus (V(P) \cup Q)$.
  \end{enumerate}
\end{lemma}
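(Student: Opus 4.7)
The proof will implement the absorbing method of Rödl, Ruciński, and Szemerédi adapted to the bicanonical framework. The plan is to build $P$ as a chain of $|X|$ short \emph{absorber gadgets} $\{A_x\}_{x\in X}$, one per vertex of $X$, interleaved with short bicanonical bridges and topped off with a long bicanonical \emph{filler} that provides the bulk coverage of each $V_i$; the bridges and the filler are produced by Corollary~\ref{cor:connecting-lemma} and Lemma~\ref{lem:path-lemma}, respectively, and all of them cycle through the classes $(V_1, V_2, \ldots, V_{k+1}, Y)$, so that $Y$ is used only as a bicanonical ``return'' class.

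The heart of the argument, which I expect to be the main obstacle, is the construction of $A_x$. Given $x \in X$, the absorber $A_x$ should be a short bicanonical $(2k+1)$-path with two properties: (i)~there is a position $m$ inside $A_x$ such that the $4k+2$ vertices of $A_x$ surrounding $m$ all lie in $V_1 \cup \ldots \cup V_{k+1}$ and in $N_\Gamma(x)$, so that inserting $x$ at position $m$ yields a $(2k+1)$-path $A_x^+$ with $V(A_x^+) = V(A_x) \cup \{x\}$ and the same endpoints as $A_x$; (ii)~both endpoints of $A_x$ are $\rho$-extendible into tuples starting with $Y$, as required by Corollary~\ref{cor:connecting-lemma}. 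Property (i) forces the middle of $A_x$ to cycle through $V_1, \ldots, V_{k+1}$ at least twice without visiting $Y$, while (ii) forces the ends of $A_x$ to look like the output of Lemma~\ref{lem:path-lemma}. To show that many such $A_x$ exist in $G$, I would restrict each $V_i$ to $V_i \cap N_\Gamma(x)$---which has size $\geq \alpha|V_i|$ and yields $(\eps/\alpha, d/2, \Gamma)$-regular pairs with the other restricted sets by Proposition~\ref{prop:large-reg-inheritance}---and invoke the Counting Lemma (Lemma~\ref{lemma:counting}) together with a Janson-type estimate (as in the proof of Lemma~\ref{lem:path-lemma}) for the few edges of $A_x$ required from $\Gnp$.

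Once the pool of absorbers is available, I would greedily select a pairwise vertex-disjoint family $\{A_x\}_{x\in X}$ (using at most $O(\xi)|V_i|$ vertices of each class and $O(\xi)|Y|$ vertices of $Y$, which is admissible for $\xi$ chosen small) and chain them into an absorbing path $P_{\mathrm{abs}}$ via $|X|$ applications of Corollary~\ref{cor:connecting-lemma}, each bridge being a short $(V_1, \ldots, V_{k+1}, Y)$-bicanonical $(2k+1)$-path. Then I would build a filler $P_{\mathrm{fill}}$ using Lemma~\ref{lem:path-lemma} inside $(V_1 \cup \ldots \cup V_{k+1} \cup Y) \setminus (Q \cup V(P_{\mathrm{abs}}))$, tuning its tuple length $\ell$ so that after one further application of Corollary~\ref{cor:connecting-lemma} joining $P_{\mathrm{abs}}$ and $P_{\mathrm{fill}}$ exactly $(1 \pm 0.001)\gamma|V_i|$ vertices of each $V_i$ remain uncovered. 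The hypotheses $|V_i \cap Q| \leq \delta|V_i|$ (with $\delta$ small relative to $\gamma$ and $\lambda$) and $|Y \setminus Q| \geq \lambda|Y|$, combined with Proposition~\ref{prop:large-reg-inheritance}, ensure that all regularity assumptions of Lemma~\ref{lem:path-lemma} and Corollary~\ref{cor:connecting-lemma} pass to the working sets after removing $Q$; conclusions \ref{s-extendible} and \ref{t-extendible} of Lemma~\ref{lem:path-lemma} then deliver the required extendibility of the outermost endpoints of $P$, and the total $Y$-usage is kept below $\lambda|Y|$ by capping the filler's $Y$-consumption at $\lambda|Y|/2$.
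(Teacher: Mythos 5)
Your absorber gadget itself is sound and matches the paper's mechanism (surround an insertion slot by vertices of $N_\Gamma(x)\cap(V_1\cup\ldots\cup V_{k+1})$, get the cross-class edges from regularity and the few remaining edges from $\Gnp$ via Counting~Lemma plus Janson). The architecture around it, however, has a genuine quantitative gap: building \emph{one gadget per vertex of $X$} and chaining them with $|X|$ bridges from Corollary~\ref{cor:connecting-lemma} makes both the number of connections and the $Y$-usage proportional to $|X|$. Every bridge tuple must contain a $(k+2)$-nd class (a cyclic arrangement of only $V_1,\ldots,V_{k+1}$ violates the corollary's requirement that entries at distance $\leq k+1$ be distinct regular pairs), and here that class can only be $Y$; likewise your gadget endpoints are extendible into tuples starting with $Y$. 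So $|Y\cap V(P)|=\Omega(|X|)$, and to meet property~\ref{C-small-usage} you are forced into $\xi\lesssim\lambda$; the hypothesis $t\leq\delta(\mu,\rho)n$ of Corollary~\ref{cor:connecting-lemma} similarly forces $\xi\lesssim\rho\mu$. But the lemma demands $\xi=\xi(\alpha)$ only, and this is not cosmetic: in the proof of Lemma~\ref{lem:absorbing-lemma} the parameters $\lambda$ and $\mu$ fed into this lemma depend on $\gamma$, and in the proof of Theorem~\ref{thm:main-result} one sets $\gamma=\xi^2/4$, so a $\xi$ depending on $\lambda$ or $\mu$ makes the constant hierarchy circular. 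The paper avoids exactly this by absorbing a whole block of $L=\Theta\bigl((1-\gamma)/\lambda\bigr)$ vertices of $X$ inside a \emph{single} application of Lemma~\ref{lem:path-lemma} (the tuple $(\mbN(x_1),\mbN(x_1),\ldots,\mbN(x_L),\mbN(x_L))$), with consecutive segments glued through the endpoint-extendibility sets rather than by separate bridges, so that $Y$ is touched only $O(1)$ times per segment while each segment covers $\Theta(L)$ vertices of every $V_i$; this is what yields $\xi=\alpha/48$ independent of $\gamma,\lambda,\mu$.

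A second gap is the filler: you cannot obtain the bulk coverage from one application of Lemma~\ref{lem:path-lemma} with ``tuned'' length, because there $\ell$ must be a constant --- $\eps$, $\rho$ and $C$ all degrade with $\ell$ (the union bound over subsets, the Counting Lemma applied to the fixed graph $R^+$, and the Janson estimate all require $\ell=O(1)$), whereas covering all but a $\gamma$-fraction of each $V_i$ needs a path on $\Theta(n)$ vertices. The coverage must be done iteratively, extending the current path by constant-length segments while maintaining an invariant that the running endpoint stays extendible (the paper's induction with (C1)--(C3)); and the same $Y$-budget issue reappears here, so each extension segment must again have length $\Theta(1/\lambda)$ --- ``capping the filler's $Y$-consumption'' is the desired conclusion, not a mechanism. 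With these two repairs your outline essentially collapses into the paper's proof.
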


\begin{proof}
  Given $k$, $\alpha$, $\gamma$, $\lambda$, $\mu$, and $d$, let $L = \ceil*{8(1
  - \gamma)/\lambda^2}$, $\ell_1 = 2L(k + 1)$, and $\ell_2 = (2L + 1)(k + 1) +
  1$. Furthermore, we use
  \begin{gather*}
    \eta = \min{\{ \alpha, \gamma, \lambda \}}, \quad \xi = \alpha/48, \quad
    \delta = \min{\{1/10^4, \alpha/8 \}}, \\
    \rho' = \min{\{ \eta/10^4, \rho_{\ref{lem:path-lemma}}(d/2, k, \ell_1),
    \rho_{\ref{lem:path-lemma}}(d/2, k, \ell_2) \}}, \quad \rho = \rho'\eta^2/8,
    \quad \mu' = \rho'\eta^3 \mu/8, \\
    \eps' = \min{\{ \eps_{\ref{lem:path-lemma}}(d/2, k, \ell_1),
    \eps_{\ref{lem:path-lemma}}(d/2, k, \ell_2) \}}, \quad \eps = \min{\{ d/2,
    \eps'\rho'\eta^2/8 \}},
  \end{gather*}
  and $C = \max{\{ C_{\ref{lem:path-lemma}}(\mu', d/2, k, \ell_1),
  C_{\ref{lem:path-lemma}}(\mu', d/2, k, \ell_2) \}}$. Throughout we always
  apply Lemma~\ref{lem:path-lemma} with $k$, $\mu'$ (as $\mu$) and $d/2$ (as
  $d$), and thus omit explicitly stating the parameters.

  Let $V_i' \subseteq V_i \setminus Q$ and $Y' \subseteq Y \setminus Q$ be
  arbitrary sets of size $(1 - \delta)|V_i|$ and $\lambda|Y|$, and set $m =
  \floor*{\big( (1 - \gamma)|V_i'| + 2 \big) / (4L + 2)}$. In order to show the
  desired statement we successively apply Lemma~\ref{lem:path-lemma} to the
  appropriate subsets of $V_i'$ and $Y'$ to construct paths $P_1 \subseteq
  \dotsb \subseteq P_m$ where each $P_j$, $j \in [m]$, intersects every $V_i'$
  in exactly $4L \cdot j + 2(j - 1)$ vertices, $Y'$ in at most $2(j - 1)$
  vertices, and contains no other vertices of $V(G)$. This reveals how the
  constant $L$ helps us towards satisfying \ref{C-small-usage}. In particular,
  such a path $P_m$ satisfies \ref{C-leftover} and \ref{C-small-usage}.

  Let $X_1 \cup \dotsb \cup X_r = X$ be an arbitrary partition of $X$ into sets
  of size exactly $L$ (the last set may be smaller, however this does not affect
  any part of the argument). Consider the vertices $x_1, \dotsc, x_L \in X_1$.
  Let $N_i(x) := N_\Gamma(x, V_i')$, for $i \in [k + 1]$ and $x \in X_1$, and
  let $\mbN(x) = (N_1(x), \dotsc, N_{k + 1}(x))$. We apply
  Lemma~\ref{lem:path-lemma} with $\ell_1$ (as $\ell$),
  \[
    \big( \mbN(x_1), \mbN(x_1), \mbN(x_2), \mbN(x_2), \dotsc, \mbN(x_L),
    \mbN(x_L) \big)
  \]
  (as $V_1, \dotsc, V_\ell$), and $Y'$ (as both $Y_s$ and $Y_t$) to obtain a
  $(2k + 1)$-path $P_1$ connecting some $\mbs$ to $\mbt_1$. By construction
  $P_1$ is $X_1$-absorbing (see Figure~\ref{fig:absorbing-path}).

  \begin{assumptions}[Lemma~\ref{lem:path-lemma}]
    Since $|N_i(x)| \geq \alpha|V_i| - \delta|V_i| \geq (\alpha/2)|V_i| \geq
    (\alpha/2)\mu n$ and $|Y'| \geq \lambda|Y| \geq \lambda\mu n$, by the
    assumptions of the lemma, and $(V_i, V_j)$ and $(V_i, Y)$ are $(\eps, d,
    \Gamma)$-regular for all distinct $i, j \in [k + 1]$,
    Proposition~\ref{prop:large-reg-inheritance} shows that their respective
    subsets of size at least $(\alpha/2)|V_i|$ and $\lambda|Y|$ form $(\eps',
    d/2, \Gamma)$-regular pairs.
  \end{assumptions}

  \begin{figure}[!htbp]
    \centering
    \includegraphics[scale=0.8]{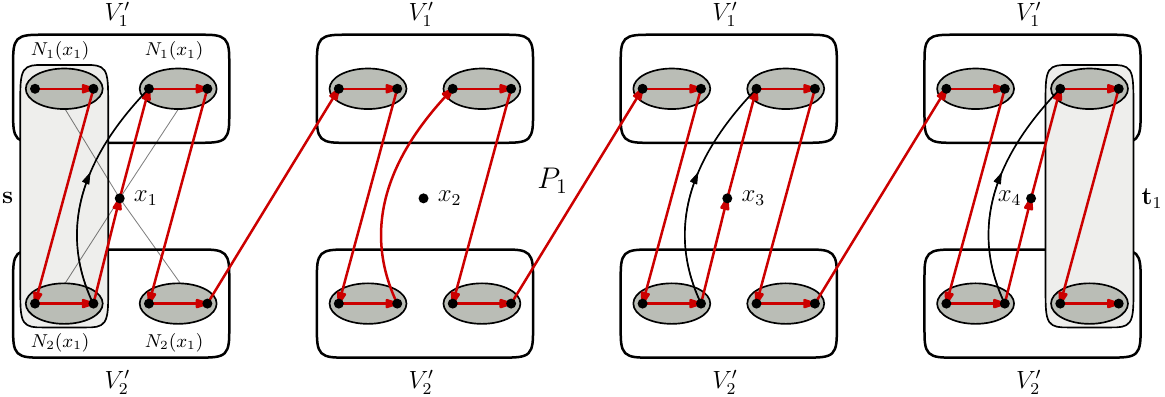}
    \caption{An example of an $X_1$-absorbing $(2k + 1)$-path $P_1$ connecting
    $\mbs$ to $\mbt_1$ for $k = 1$. The $(2k + 1)$-path including $V(P_1) \cup
    \{x_1, x_3, x_4\}$ is given in red. For simplicity, we depict only skeletons
    of the mentioned $(2k + 1)$-paths.}
    \label{fig:absorbing-path}
  \end{figure}

  As $\eta \leq \min{\{\alpha, \lambda\}}$, by
  Lemma~\ref{lem:path-lemma}\ref{s-extendible}, $\rev(\mbs)$ is $(\mbV_s
  \setminus V(P_1), \rho'\eta/2)$-extendible where $\mbV_s = (Y', V_{k + 1}',
  \dotsc, V_2')$. Let $S_i \subseteq N_\Gamma(\rev(\mbs)^{\geq 2i}, \mbV_s^i)
  \setminus V(P_1)$, $i \in [k + 1]$, be arbitrary sets of size $|S_i| =
  (\rho'\eta/4)|\mbV_s^i \setminus V(P_1)|$, and set $S := \bigcup_{i \in [k +
  1]} S_i$. Moreover, we conclude that $\mbt_1$ is $(\mbV_t \setminus (V(P_1)
  \cup S), \rho'\eta/4)$-extendible, where $\mbV_t = (Y', V_1', \dotsc, V_k')$,
  due to Lemma~\ref{lem:path-lemma}\ref{t-extendible} and as
  \[
    \frac{\rho'\eta}{2}|V_i' \setminus V(P_1)| - |V_i' \cap S| \geq
    \frac{\rho'\eta}{2}|V_i' \setminus V(P_1)| - \frac{\rho'\eta}{4}|V_i'
    \setminus V(P_1)| \geq \frac{\rho'\eta}{4}|V_i' \setminus V(P_1)|
  \]
  for all $i \in [k + 1]$, and similarly for $Y' \setminus (V(P_1) \cup S)$.

  The goal is to gradually extend the path $P_1$ into paths $P_1 \subseteq P_2
  \subseteq \dotsb \subseteq P_m$ such that $P_j$ is $(\bigcup_{1 \leq j' \leq
  j} X_{j'})$-absorbing, for $j \in [r]$, and the paths $P_{r + 1} \subseteq
  \dotsb \subseteq P_m$ progressively cover the remaining vertices of $V_i$'s.
  More precisely, we show by induction on $j \in [m]$ that there exists a path
  $P_j \subseteq (V_1' \cup \dotsb \cup V_{k + 1}' \cup Y') \setminus S$ such
  that
  \begin{enumerate}[(C\arabic*), font=\itshape, leftmargin=2.8em]
    \item\label{C-inv-path} $P_j$ is $(\bigcup_{1 \leq i \leq \min{\{j, r\}}}
      X_i)$-absorbing and connects $\mbs$ to some $\mbt_j$,
    \item\label{C-inv-cover} $|V(P_j) \cap V_i| = 4L \cdot j + 2(j - 1)$, for
      all $i \in [k + 1]$, and $|V(P_j) \cap Y| \leq 2(j - 1)$, and
    \item\label{C-inv-extendible} $\mbt_j$ is $(\mbV_t \setminus (V(P_j) \cup
      S), \rho'\eta/4)$-extendible.
  \end{enumerate}
  Hence, for $j = m$ we obtain an $X$-absorbing $(2k + 1)$-path $P_m$ connecting
  $\mbs$ to $\mbt_m$ and which covers exactly $\floor{(1 - \gamma)|V_i'|}$
  vertices in every $V_i$. Consequently, $|V_i \setminus (V(P_m) \cup Q)| = (1
  \pm 0.001)\gamma|V_i|$. As for \ref{C-small-usage}, \ref{C-inv-cover}
  implies that at most $2m \leq (1 - \gamma)|Y|/L \leq \lambda |Y|$ vertices of
  $Y$ are used. In order to see \ref{C-cliques-s} recall that the sets $S_i$
  are of size $(\rho'\eta/4)|V_i' \setminus V(P_1)| \geq (\rho'\eta/4)(1 -
  \delta)|V_i| - 4L \geq (\rho'\eta^2/8)|V_i|$ and $(\rho'\eta/4)|Y'| - 4L \geq
  (\rho'\eta/8)\lambda|Y| \geq (\rho'\eta^2/8)|Y|$, which in turn translates
  into being $(\mbS, \rho)$-extendible due to our choice of constants (in
  particular, $\rho = \rho'\eta^2/8$). Lastly, \ref{C-cliques-t} follows from
  \ref{C-inv-extendible} and our choice of constants again.

  Note that for $j = 1$ the path $P_1$ satisfies
  \ref{C-inv-path}--\ref{C-inv-extendible}. Assume that the induction hypothesis
  holds for some $j - 1$, $j > 1$, and let us show it for $j$. Let $T_i =
  N_\Gamma(\mbt_{j - 1}^{\geq 2i}, \mbV_t^i)$, for all $i \in [k + 1]$. The
  argument slightly differs depending on whether $j \leq r$ or $r < j$.

  \textbf{Case $j \leq r$.} Consider the vertices $x_1, \dotsc, x_L \in X_j$. We
  apply Lemma~\ref{lem:path-lemma} with $\ell_2$ (as $\ell$),
  \[
    \big( \underset{T_1 \subseteq Y'}{T_1}, \dotsc, \underset{T_{k + 1}
    \subseteq V_k'}{T_{k + 1}}, V_{k + 1}', \mbN(x_1), \mbN(x_1), \mbN(x_2),
    \mbN(x_2), \dotsc, \mbN(x_L), \mbN(x_L) \big) \setminus (V(P_{j - 1}) \cup
    S)
  \]
  (as $V_1, \dotsc, V_\ell$), and $Y' \setminus V(P_{j - 1})$ (as both $Y_s$ and
  $Y_t$) to obtain a $(2k + 1)$-path $P'$ connecting $\mbt_{j - 1}$ to some
  $\mbt_j$. By construction $P'$ is $X_j$-absorbing. One easily checks that
  \ref{C-inv-path}--\ref{C-inv-extendible} hold for $P_j := P_{j - 1} \cup P'$.

  \begin{assumptions}[Lemma~\ref{lem:path-lemma}]
    Observe that by assumption of the lemma $\deg_\Gamma(x, V_i) \geq
    \alpha|V_i|$, and the fact that $|S_i| = (\rho'\eta/4)|\mbV_s^i \setminus
    V(P_1)|$, we have
    \begin{equation}\label{eq:local-cov-sets-lb-neighbourhoods}
      \big| N_i(x) \setminus (V(P_{j - 1}) \cup S) \big| \geq \alpha|V_i| -
      \delta|V_i| - r \cdot 6L - (\rho'\eta/4)|V_i| \geq (\alpha - \delta - 6\xi
      - \rho'\eta/4)|V_i| \geq (\eta/2)|V_i|,
    \end{equation}
    by our choice of constants. Similarly,
    \begin{equation}\label{eq:local-cov-sets-lb-Y}
      \big| Y' \setminus (V(P_{j - 1}) \cup S) \big| \geq \lambda|Y| - 2r -
      (\rho'\eta/4)|Y| \geq (\lambda - 2\xi/L - \rho'\eta/4)|Y| \geq
      (\eta/2)|Y|.
    \end{equation}
    Lastly,
    \begin{equation}\label{eq:local-cov-sets-lb-Ti}
      \begin{aligned}
        \big| T_i \setminus (V(P_{j - 1}) \cup S) \big| &\geq (\rho'\eta/4)
        |V_i' \setminus (V(P_{j - 1}) \cup S)| \geq (\rho'\eta/4) \big(1 - (1 -
        \gamma) - \rho'\eta/4 \big)|V_i'| \\
        & \geq (\rho'\eta/4)(\gamma - \rho'\eta/4)|V_i'| \geq
        (\rho'\eta^2/8)|V_i'|,
      \end{aligned}
    \end{equation}
    for all $2 \leq i \leq k + 1$, and
    \begin{equation}\label{eq:local-cov-sets-lb-T1}
      \begin{aligned}
        \big| T_1 \setminus (V(P_{j - 1}) \cup S) \big| &\geq (\rho'\eta/4)|Y'
        \setminus (V(P_{j - 1}) \cup S)| \geq (\rho'\eta/4)\big(1 - (1 -
        \lambda) - \rho'\eta/4 \big)|Y'| \\
        & \geq (\rho'\eta/4)(\lambda - \rho'\eta/4)|Y'| \geq
        (\rho'\eta^2/8)|Y'|.
      \end{aligned}
    \end{equation}
    As $(V_i, V_j)$ and $(V_i, Y)$ are $(\eps, d, \Gamma)$-regular for all
    distinct $i, j \in [k + 1]$, by Proposition~\ref{prop:large-reg-inheritance}
    their respective subsets of sizes as in
    \eqref{eq:local-cov-sets-lb-neighbourhoods}--\eqref{eq:local-cov-sets-lb-T1}
    form $(\eps', d/2, \Gamma)$-regular pairs.
  \end{assumptions}

  \textbf{Case $r < j$.} Let $\mbV = (V_1', \dotsc, V_{k + 1}')$. In this case
  we apply Lemma~\ref{lem:path-lemma} with $\ell_2$ (as $\ell$),
  \[
    (\underset{T_1 \subseteq Y'}{T_1}, \underset{T_2 \subseteq V_1'}{T_2},
    \dotsc, \underset{T_{k + 1} \subseteq V_k'}{T_{k + 1}}, V_{k + 1}',
    \underbrace{\mbV, \mbV, \dotsc, \mbV}_{\text{$2L$ times}}) \setminus (V(P_{j
    - 1}) \cup S)
  \]
  (as $V_1, \dotsc, V_\ell$), $Y_s = \varnothing$, and $Y'$ (as $Y_t$), in order
  to obtain a $(2k + 1)$-path $P'$ connecting $\mbt_{j - 1}$ to some $\mbt_j$.
  Setting $P_j := P_{j - 1} \cup P'$ we get a $(2k + 1)$-path which satisfies
  \ref{C-inv-path}--\ref{C-inv-extendible}. Checking that the assumptions of
  Lemma~\ref{lem:path-lemma} are satisfied is analogous to the previous
  application (see
  \eqref{eq:local-cov-sets-lb-neighbourhoods}--\eqref{eq:local-cov-sets-lb-T1}),
  and is thus omitted.
\end{proof}

With this at hand we give the proof of the Absorbing-Covering Lemma
(Lemma~\ref{lem:absorbing-lemma}).

\begin{proof}[Proof of Lemma~\ref{lem:absorbing-lemma}]
  We use numerous constants which we pin down next. For easier reference, set
  $\ell = k + 1$ and $r = t/(k + 1)$. Take $\gamma'$ such that it simultaneously
  satisfies $\gamma' \leq \gamma$ and $0.98\gamma' > 0.9 \gamma$, and let
  \begin{gather*}
    \xi' = \xi_{\ref{lem:local-absorbing-lemma}}(\alpha), \quad \delta =
    \delta_{\ref{lem:local-absorbing-lemma}}(\alpha), \quad \xi = \alpha
    \xi'/(k + 1), \quad \lambda = \min{\{
    \alpha\delta/2, \alpha\gamma'/10^3 \}}, \quad \mu' = \gamma'\mu/(2t), \\
    \rho' = \min{\{ \alpha\delta/4, \alpha\gamma'/16,
    \rho_{\ref{lem:path-lemma}}(d/2, k, \ell),
    \rho_{\ref{lem:local-absorbing-lemma}}(\alpha, \gamma', \lambda, d/2, k) \}},
    \quad \rho = \rho'\gamma'/4, \\
    \eps' = \min{\{ \eps_{\ref{lem:path-lemma}}(d/2, k, \ell),
    \eps_{\ref{cor:connecting-lemma}}(\rho'\gamma'/2, d/2, k),
    \eps_{\ref{lem:local-absorbing-lemma}}(\alpha, \gamma', \lambda, d/2, k) \}},
    \quad \eps = \min{\{ d/2, \eps'\gamma'/2 \}}, \quad \text{and} \\
    C = \max{\{ C_{\ref{lem:path-lemma}}(\mu', d/2, k, \ell),
    C_{\ref{cor:connecting-lemma}}(\mu', \rho'\gamma'/2, d/2, k),
    C_{\ref{lem:local-absorbing-lemma}}(\alpha, \gamma', \lambda, \mu', d/2, k)
    \}}.
  \end{gather*}
  We always apply Lemma~\ref{lem:path-lemma} with $\ell$, $k$, $\mu'$ (as
  $\mu$), and $d/2$ (as $d$), Corollary~\ref{cor:connecting-lemma} with $k$,
  $\mu'$ (as $\mu$), $\rho'\gamma'/2$ (as $\rho$), and $d/2$ (as $d$), and
  Lemma~\ref{lem:local-absorbing-lemma}, with $k$, $\alpha$, $\gamma'$ (as
  $\gamma$), $\lambda$, $\mu'$ (as $\mu$), and $d/2$ (as $d$), and thus omit
  explicitly stating the parameters.

  Let $\mbK_j = (W_{(j - 1)(k + 1) + 1}, \dotsc, W_{(j - 1)(k + 1) + k + 1})$,
  for all $j \in [r]$, and note that every such tuple is $(\eps, d,
  \Gamma)$-regular due to $(1, \dotsc, t)$ being a $k$-cycle in $R$.
  Furthermore, let $\phi \colon X \to [r]$ and $\psi \colon [r] \to [t]$ be
  functions such that:
  \begin{enumerate}[(F\arabic*), font=\itshape, leftmargin=2.8em]
    \item\label{phi} for every $x \in X$ we have $\deg_\Gamma(x,
      \mbK_{\phi(x)}^i) \geq \alpha|W|/t$, for all $i \in [k + 1]$,
    \item\label{phi-small} $|\phi^{-1}(j)| \leq |X|/(\alpha r)$, for all $j \in
      [r]$,
    \item\label{psi} $(\mbK_j, W_{\psi(j)})$ is an $(\eps, d, \Gamma)$-regular
      $(k + 2)$-tuple, for all $j \in [r]$, and
    \item\label{psi-small} $|\psi^{-1}(i)| \leq 1/\alpha$, for all $i \in [t]$.
  \end{enumerate}
  We show that such functions exist. Consider some $x \in X$ and let $\hbar \in
  [0, 1]$ be a fraction of tuples $\mbK_j$ for which \ref{phi} holds. Since
  $\deg_\Gamma(x, W) \geq (\frac{k}{k + 1} + \alpha)|W|$ and each $W_i$ is of
  size $|W|/t$, we derive
  \[
    \hbar r \cdot (k + 1)\frac{|W|}{t} + (1 - \hbar)r \cdot (k +
    \alpha)\frac{|W|}{t} \geq \Big( \frac{k}{k + 1} + \alpha \Big)|W|.
  \]
  Direct computation gives $\hbar \geq (k + 1)\alpha - \alpha \geq \alpha$. In
  conclusion, for every $x \in X$ there are at least $\alpha r$ values $j \in
  [r]$ such that setting $\phi(x) := j$ satisfies \ref{phi}. Consequently, a
  simple averaging argument shows \ref{phi-small}. A similar calculation shows
  that a function $\psi$ exists and the proof is thus omitted. Lastly, let $X_1
  \cup \dotsb \cup X_r = X$ be a partition given by $X_j = \phi^{-1}(j)$.

  We show by induction on $q \in [r]$ that there exist vertex-disjoint $(2k +
  1)$-paths $P_1, \dotsc, P_q$ such that for every $j \in [q]$ the path $P_j
  \subseteq \mbK_j \cup W_{\psi(j)}$ connects some $\mbs_j$ to $\mbt_j$ and
  satisfies the following properties:
  \begin{enumerate}[label=(I\arabic*), font=\itshape, leftmargin=2.8em]
    \item\label{I-absorbing} the $(2k + 1)$-path $P_j$ is $X_j$-absorbing,
    \item\label{I-leftover} $|\mbK_j^i \setminus V(P_j)| = (1 \pm
      0.001)\gamma'|W|/t$, for all $i \in [k + 1]$,
    \item\label{I-leftover-others} $|W_{\psi(j)} \cap V(P_j)| \leq
      \lambda|W|/t$, and
    \item\label{I-extendible} $\rev(\mbs_j)$ is $(\mbS_j, \rho')$-extendible and
      $\mbt_j$ is $(\mbT_j, \rho')$-extendible, where
      \[
        \mbS_j = \big( W_{\psi(j)}, \rev(\mbK_j^{\geq 2}) \big) \setminus
        \bigcup_{j' \in [q]} V(P_{j'}) \quad \text{and} \quad \mbT_j =
        (W_{\psi(j)}, \mbK_j^{\leq k}) \setminus \bigcup_{j' \in [q]} V(P_{j'}).
      \]
  \end{enumerate}

  Consider the base of the induction $q = 1$. We apply
  Lemma~\ref{lem:local-absorbing-lemma} with $W_1, \dotsc, W_{k + 1}$ (as $V_1,
  \dotsc, V_{k + 1}$), $X_1$ (as $X$), $W_{\psi(1)}$ (as $Y$), and $Q =
  \varnothing$, in order to obtain an $X_1$-absorbing $(2k + 1)$-path $P_1$
  connecting some $\mbs_1$ to $\mbt_1$, with $\rev(\mbs_1)$ being $(\mbS_1,
  \rho')$-extendible and $\mbt_1$ being $(\mbT_1, \rho')$-extendible, and thus
  trivially satisfying all \ref{I-absorbing}--\ref{I-extendible}.

  Assume that the induction hypothesis holds for some $q - 1$, $q > 1$, and let
  us show it for $q$. First, for all $j \in [q - 1]$ and $i \in [k + 1]$, let
  \[
    N_{s_j}^i \subseteq N_\Gamma(\rev(\mbs_j)^{\geq 2i}, \mbS_j^i) \quad
    \text{and} \quad N_{t_j}^i \subseteq N_\Gamma(\mbt_j^{\geq 2i}, \mbT_j^i)
  \]
  be sets of size $|N_{s_j}^i| = \rho'|\mbS_j^i|$ and $|N_{t_j}^i| =
  \rho'|\mbT_j^i|$. This helps us maintain the property \ref{I-extendible}, for
  all $j \in [q - 1]$. Next, we use Lemma~\ref{lem:local-absorbing-lemma} with
  $\mbK_q$ (as $V_1, \dotsc, V_{k + 1}$), $X_q$ (as $X$), $W_{\psi(q)}$ (as
  $Y$), and
  \begin{equation}\label{eq-Q-size}
    Q := \bigcup_{j \in [q - 1], i \in [k + 1]} V(P_j) \cup N_{s_j}^i \cup
    N_{t_j}^i.
  \end{equation}
  We obtain a $(2k + 1)$-path $P_q$ connecting $\mbs_q$ to $\mbt_q$, with
  $\rev(s_q)$ being $(\mbS_q, \rho')$-extendible and $\mbt_q$ is $(\mbT_q,
  \rho')$-extendible, establishing \ref{I-absorbing}--\ref{I-extendible}.
  Crucially, \ref{I-extendible} remains to hold for $j < q$ as $P_q$ does not
  intersect the set $Q$.

  \begin{assumptions}[Lemma~\ref{lem:local-absorbing-lemma}]
    Firstly, for all $j \in [r]$ we have
    \[
      |X_q| \leq |X|/(\alpha r) \leq (k + 1)\xi|W|/(\alpha t) \leq \xi'|W|/t.
    \]
    Secondly, for every $x \in X_q$ we have $\deg_\Gamma(x, \mbK_q^i) \geq
    \alpha|W|/t$, for all $i \in [k + 1]$, by \ref{phi}. Thirdly, we know that
    $(\mbK_q, W_{\psi(q)})$ is an $(\eps, d, \Gamma)$-regular $(k + 2)$-tuple by
    \ref{psi}. Lastly, we know that for all $i \in [k + 1]$,
    \begin{equation}\label{eq:Q-lb-Wi}
      |W_{(q - 1)(k + 1) + i} \setminus Q| \geq |W|/t - (1/\alpha) \cdot
      (\lambda|W|/t + 2\rho'|W|/t) \geq (1 - \delta)|W|/t,
    \end{equation}
    by the choice of constants. Similarly,
    \begin{equation}\label{eq:Q-lb-psi}
      |W_{\psi(q)} \setminus Q| \geq 0.9\gamma'|W|/t - (1/\alpha) \cdot
      (\lambda|W|/t + 2\rho'|W|/t) \geq (\gamma'/2)|W|/t \geq \lambda|W|/t,
    \end{equation}
    again by our choice of constants.
  \end{assumptions}

  Before we patch all the paths together into one long $X$-absorbing path which
  covers almost everything, we need to make sure that its endpoints are $(\mbS,
  \rho)$-extendible and $(\mbT, \rho)$-extendible. As before, let
  \[
    N_{s_j}^i \subseteq N_\Gamma(\rev(\mbs_j)^{\geq 2i}, \mbS_j^i) \quad
    \text{and} \quad N_{t_j}^i \subseteq N_\Gamma(\mbt_j^{\geq 2i}, \mbT_j^i)
  \]
  for all $j \in [r]$ and $i \in [k + 1]$, be sets of size $|N_{s_j}^i| =
  \rho'|\mbS_j^i|$ and $|N_{t_j}^i| = \rho'|\mbT_j^i|$, and
  \[
    Q' := \bigcup_{j \in [r], i \in [k + 1]} V(P_j) \cup N_{s_j}^i \cup
    N_{t_j}^i.
  \]
  We apply Lemma~\ref{lem:path-lemma} with $(W_1, \dotsc, W_{k + 1}) \setminus
  Q'$ (as $V_1, \dotsc, V_\ell$), and $W_{\psi(1)} \setminus Q'$ (as both $Y_s$
  and $Y_t$). This yields a $(2k + 2)$-clique $\mbt$ which is $(\mbT \setminus
  Q', \rho'\gamma'/2)$-extendible.

  \begin{assumptions}[Lemma~\ref{lem:path-lemma}]
    One easily checks that $|W_i \setminus Q'| \geq (\gamma'/2)|W|/t$ and
    $|W_{\psi(1)} \setminus Q'| \geq (\gamma'/2)|W|/t$, similarly as in
    \eqref{eq:Q-lb-Wi} and \eqref{eq:Q-lb-psi}. Therefore, it follows from
    \ref{psi} and Proposition~\ref{prop:large-reg-inheritance}, that they form
    an $(\eps', d/2, \Gamma)$-regular $(k + 2)$-tuple.
  \end{assumptions}

  Finally, we apply Corollary~\ref{cor:connecting-lemma} with
  \[
    \big\{ (\mbT_j, \mbK_j^{k + 1} \setminus Q', \mbK_{j + 1}^1 \setminus Q',
    \mbS_{j + 1}) \big\}_{j \in [r - 1]}\ \cup \big\{ (\mbT_r, \mbK_r^{k + 1}
    \setminus Q', \mbK_1^1 \setminus Q', \mbT) \big\}
  \]
  (as $\{\mbV_i\}$), and $\{\mbt_j, \mbs_{j + 1}\}_{j \in [r - 1]} \cup
  \{\mbt_r, \mbt\}$ (as $\{\mbs_i, \mbt_i\}$), to obtain a $(2k + 1)$-path $P$
  connecting $\mbs := \mbs_1$ to $\mbt$.

  It remains to show that $P$ satisfies
  Lemma~\ref{lem:absorbing-lemma}\ref{A-leftover}--\ref{A-cliques-t}.  Property
  \ref{I-leftover} directly settles the upper bound in
  Lemma~\ref{lem:absorbing-lemma}\ref{A-leftover}. As for the lower bound,
  observe that we may have used some additional vertices of $W_i$ as $Y$ in
  Lemma~\ref{lem:local-absorbing-lemma}, in case $\psi(j) = i$. However, this
  happens at most $1/\alpha$ times, by \ref{phi-small}, and every time at most
  $\lambda |W|/t$ vertices of it are used, due to
  Lemma~\ref{lem:local-absorbing-lemma}\ref{C-small-usage}. Lastly, at most $(r
  + 1) \cdot 2(2k + 4)$ vertices are used by the applications of
  Lemma~\ref{lem:path-lemma} and Corollary~\ref{cor:connecting-lemma}.
  Therefore,
  \[
    |W_i \setminus V(P)| \geq (1 - 0.001)\gamma'|W|/t - 1/\alpha \cdot \lambda
    |W|/t - (r + 1) \cdot 2(2k + 4) \geq (1 - 0.1)\gamma|W|/t,
  \]
  for all $i \in [t]$, by our choice of constants. The properties
  \ref{A-cliques-s} and \ref{A-cliques-t}, follow directly from
  \ref{I-extendible}, the observation that additionally at most $(r + 1) \cdot
  2(2k + 4)$ vertices are used in order to join the paths $P_j$ into $P$, and
  our choice of constants. Setting $z = \psi(1)$ completes the proof.
\end{proof}

\section{Proof of Theorem \ref{thm:main-result}}

Throughout the proof we make use of several constants which we pin down next.
Given $\alpha > 0$ and $k \in \N$, we let
\begin{gather*}
  d = \alpha/4, \qquad \xi = \xi_{\ref{lem:absorbing-lemma}}(\alpha/4, k),
  \qquad \gamma = \xi^2/8, \qquad
  \rho = \rho_{\ref{lem:absorbing-lemma}}(\alpha/4, \gamma, d/2, k), \\
  \eps'' = \min{\{ d/4, \eps_{\ref{cor:connecting-lemma}}(\rho, d/4, k) \}},
  \qquad \eps' = \min{\{ \eps''\gamma/2,
  \eps_{\ref{lem:absorbing-lemma}}(\alpha/4, \gamma, d/2, k) \}}, \\
  \eps = \eps'\xi^2/16, \qquad m = \max{\{n_{0_{\ref{thm:KSS}}}(k),
  32/\alpha^2\}}, \qquad \text{and} \qquad t =
  t_{\ref{thm:degree-form-regularity}}(\eps, d, m).
\end{gather*}
Lastly, set
\[
  \mu = \frac{(1 - \eps)\xi^3}{16t} \qquad \text{and} \qquad C = \max{\{
  C_{\ref{cor:connecting-lemma}}(\mu, \rho, d/4, k),
  C_{\ref{lem:absorbing-lemma}}(\alpha/4, \gamma, \mu, d/2, k) \}}.
\]
We make use of Corollary~\ref{cor:connecting-lemma} with $k$, $\mu$, $\rho$, and
$d/4$ (as $d$), and Lemma~\ref{lem:absorbing-lemma} with $k$, $t$, $\alpha/4$
(as $\alpha$), $\gamma$, $\mu$, and $d/2$ (as $d$), thus omit explicitly stating
the parameters.

Suppose $n$ is sufficiently large and let $\Gamma$ be a graph with $n$ vertices
and $\delta(\Gamma) \geq (\frac{k}{k + 1} + \alpha)n$. Set $G := \Gamma \cup
\Gnp$, for some $p \geq C/n$, and observe that $G$ has the properties as given
by Corollary~\ref{cor:connecting-lemma} and Lemma~\ref{lem:absorbing-lemma} with
high probability, for the parameters specified above. We show that such a graph
$G$ contains the $(2k + 1)$-st power of a Hamilton cycle.

Let $\cV = (V_i)_{i = 0}^{t}$ be an $\eps$-regular partition obtained by
applying the regularity lemma (Theorem~\ref{thm:degree-form-regularity}) to
$\Gamma$, let $\Gamma'$ be the spanning subgraph satisfying
Theorem~\ref{thm:degree-form-regularity}\ref{reg-except}--\ref{reg-density},
and let $R$ be the $(\eps, d, \Gamma', \cV)$-reduced graph. Without loss of
generality we may assume that $(k + 1) \mid t$. A well-known property of the
reduced graph is that it `inherits' the minimum degree of the graph $\Gamma'$,
namely
\[
  \delta(R) \geq \Big( \frac{k}{k + 1} + \frac{\alpha}{4} \Big) t.
\]
In order to see this, observe that if there is a partition class $V_i$ which
does not satisfy the above, then by
Theorem~\ref{thm:degree-form-regularity}\ref{reg-empty}--\ref{reg-density} for
every $v \in V_i$ we have
\[
  \deg_{\Gamma'}(v) < \Big( \frac{k}{k + 1} + \frac{\alpha}{4} \Big)t \cdot
  \frac{n}{t} + \eps n \leq \Big( \frac{k}{k + 1} + \frac{\alpha}{2} \Big)n \leq
  \deg_{\Gamma}(v) - (d + \eps)n,
\]
contradicting Theorem~\ref{thm:degree-form-regularity}\ref{reg-degree}, due to
our choice of constants. Therefore, as $t \geq m = n_{0_{\ref{thm:KSS}}}(k)$, by
Theorem~\ref{thm:KSS}, $R$ contains a spanning $k$-cycle. By relabelling the
vertex set of $R$ we may assume that $(1, \dotsc, t)$ is such a $k$-cycle.

Write $\tilde n := |V_i|$ and recall that $\tilde n \in [(1 - \eps)n/t, n/t]$.
For all $i \in [t]$, let $X_i \subseteq V_i$ be a subset of size $(\xi/2) \tilde
n$ chosen uniformly at random and set $W_i := V_i \setminus X_i$. From
Proposition~\ref{prop:large-reg-inheritance} it follows that, crucially, the new
partitions $\cW = (W_i)_{i \in [t]}$ and $\cX = (X_i)_{i \in [t]}$ are such that
$R$ is a subgraph of both the $(\eps', d/2, \Gamma', \cW)$-reduced graph as well
as the $(\eps', d/2, \Gamma', \cX)$-reduced graph as $\max{\{\eps/(1 - \xi/2),
2\eps/\xi\}} < \eps'$. In particular, if $(V_i, V_j)$ is $(\eps, d,
\Gamma')$-regular then $(U_i, U_j)$ is $(\eps', d/2, \Gamma')$-regular, where $U_i
\in \{X_i, W_i\}$ and $U_j \in \{X_j, W_j\}$. Let $X = \bigcup_{i \in [t]} X_i$
and $W = \bigcup_{i \in [t]} W_i$, and note that $|W| \geq (1 - \eps - \xi/2)n
\geq \mu n$ (with room to spare) and $|X| \leq \xi|W|$. As $\delta(\Gamma') \geq
(\frac{k}{k + 1} + \frac{\alpha}{2})n$ and $|V_0| \leq \eps n$ (see
Theorem~\ref{thm:degree-form-regularity}\ref{reg-except} and \ref{reg-degree}),
by using Chernoff's inequality and the union bound, we may assume that
\[
  \deg_{\Gamma'}(v, W) \geq \Big( \frac{k}{k + 1} + \frac{\alpha}{4} \Big)|W|
  \qquad \text{and} \qquad \deg_{\Gamma'}(v, X) \geq \Big( \frac{k}{k + 1} +
  \frac{\alpha}{4} \Big)|X|,
\]
for all $v \in V(G)$, as $\eps \leq \alpha/8$.

By the Absorbing-Covering Lemma (Lemma~\ref{lem:absorbing-lemma}) applied with
$X$ and $W$ there is an $X$-absorbing $(2k + 1)$-path $P_1$ connecting some
$\mbs_1$ to $\mbt_1$. Note that
Lemma~\ref{lem:absorbing-lemma}\ref{A-cliques-s}--\ref{A-cliques-t} also
provides some $z \in \bigcap_{i \in [k + 1]} N_R(i)$, such that $\rev(\mbs_1)$
is $(\mbW_s, \rho)$-extendible and $\mbt_1$ is $(\mbW_t, \rho)$-extendible,
where
\[
  \mbW_s = (W_z, W_{k + 1}, \dotsc, W_2) \setminus V(P_1) \quad \text{and} \quad
  \mbW_t = (W_z, W_1, \dotsc, W_k) \setminus V(P_1).
\]

Let $V_0' := V_0 \cup (W \setminus V(P_1))$ be the set of all the vertices from
the exceptional set of the regular partition $\cV$ and all the unused vertices
of $W$. Note that, by the regularity lemma
(Theorem~\ref{thm:degree-form-regularity}\ref{reg-except}) and the
Absorbing-Covering Lemma (Lemma~\ref{lem:absorbing-lemma}\ref{A-leftover}),
we have $|V_0'| \leq \eps n + 1.1\gamma n \leq 2\gamma n$. Crucially, $V_0'$ is
significantly smaller than $X$. We apply Lemma~\ref{lem:absorbing-lemma} once
again, this time with $V_0'$ (as $X$) and $X$ (as $W$) to obtain a $(2k +
1)$-path $P_2$ connecting some $\mbs_2$ to $\mbt_2$ which is $V_0'$-absorbing.

\begin{assumptions}[Lemma~\ref{lem:absorbing-lemma}]
  We have $|X| \geq (\xi/2)(1 - \eps)n \geq \mu n$, $|V_0'| \leq 2\gamma n \leq
  \xi|X|$, and $\deg_{\Gamma'}(v, X) \geq (\frac{k}{k + 1} +
  \frac{\alpha}{4})|X|$, for all $v \in V(G)$.
\end{assumptions}

By Lemma~\ref{lem:absorbing-lemma}\ref{A-cliques-s}--\ref{A-cliques-t} there
exists $z' \in \bigcap_{i \in [k + 1]} N_R(i)$ such that $\rev(\mbs_2)$ is
$(\mbX_s, \rho)$-extendible and $\mbt_2$ is $(\mbX_t, \rho)$-extendible, where
$\mbX_s = (X_{z'}, X_{k + 1}, \dotsc, X_2) \setminus V(P_2)$ and $\mbX_t =
(X_{z'}, X_1, \dotsc, X_k) \setminus V(P_2)$.

For such $z$ and $z'$, by the definition of a reduced graph and
Proposition~\ref{prop:large-reg-inheritance} we have that
\begin{equation}\label{eq:R-tuples}
  \begin{split}
    & (W_z, W_1, \dotsc, W_k), \quad (W_2, \dotsc, W_{k + 1}, X_1), \quad (X_2,
    \dotsc, X_{k + 1}, X_{z'}), \\
    & (X_{z'}, X_1, \dotsc, X_k), \quad (X_2, \dotsc, X_{k + 1}, W_1), \quad (W_2,
    \dotsc, W_{k + 1}, W_z)
  \end{split}
\end{equation}
are $(\eps', d/2, \Gamma')$-regular $(k + 1)$-tuples. We now apply
Corollary~\ref{cor:connecting-lemma} with
\begin{align*}
  \big\{ \big( \mbW_t, W_{k + 1} \setminus V(P_1), X_1 \setminus V(P_2),
  \rev(\mbX_s) \big),
  \big( \mbX_t, X_{k + 1} \setminus V(P_2), W_1 \setminus V(P_1), \rev(\mbW_s)
  \big) \big\}
\end{align*}
(as $\{\mbV_1, \mbV_2\}$), and $\{(\mbt_1, \mbs_2), (\mbt_2, \mbs_1)\}$, to get
two $(2k + 1)$-paths: $P_{12}$ connecting $\mbt_1$ to $\mbs_2$ and $P_{21}$
connecting $\mbt_2$ to $\mbs_1$ (see Figure~\ref{fig:closing-the-cycle}).

\begin{figure}[!htbp]
  \centering
  \includegraphics[scale=0.75]{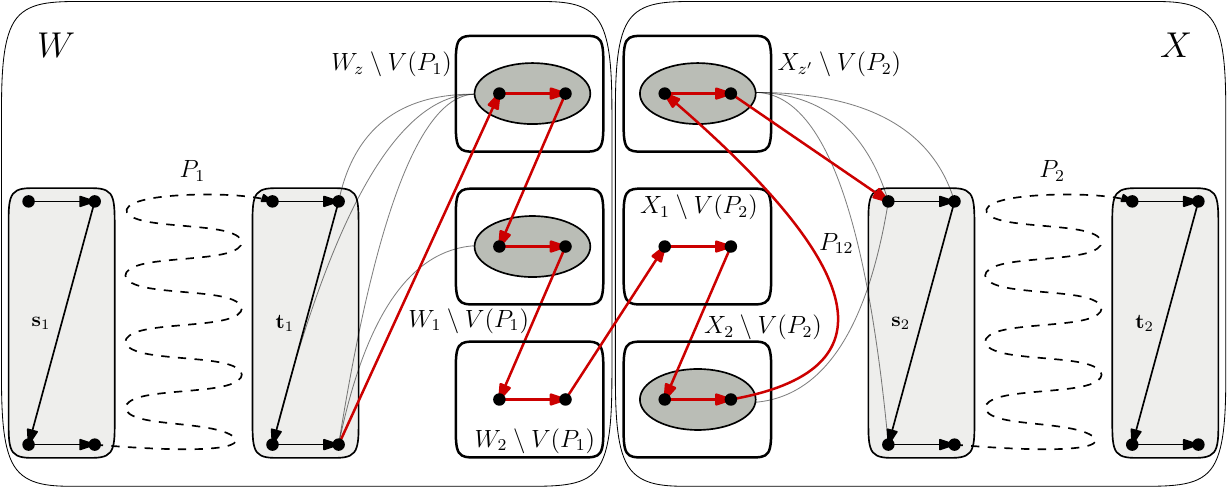}
  \caption{An example of connecting the $(2k + 1)$-path $P_1$ to the $(2k +
  1)$-path $P_2$ by a $(2k + 1)$-path $P_{12}$ connecting $\mbt_1$ to $\mbs_2$
  (given in red) for $k = 1$. For simplicity, we depict only skeletons of the
  mentioned $(2k + 1)$-paths.}
  \label{fig:closing-the-cycle}
\end{figure}

\begin{assumptions}[Corollary~\ref{cor:connecting-lemma}]
  Note that
  \[
    |W_i \setminus V(P_1)| \geq 0.9\gamma|W_i| \geq
    (\xi^3/16)\tilde n \geq \mu n \quad \text{and} \quad |X_i \setminus V(P_2)|
    \geq 0.9\gamma|X_i| \geq (\xi^3/16)\tilde n \geq \mu n,
  \]
  by Lemma~\ref{lem:absorbing-lemma}\ref{A-leftover}. Hence, due to
  \eqref{eq:R-tuples} and as $2\eps'/\gamma \leq \eps''$, the required sets are
  $(\eps'', d/4, \Gamma')$-regular by
  Proposition~\ref{prop:large-reg-inheritance}. Lastly, by
  Lemma~\ref{lem:absorbing-lemma}\ref{A-cliques-s}--\ref{A-cliques-t} the
  cliques $\mbt_1$, $\rev(\mbs_2)$, $\mbt_2$, and $\rev(\mbs_1)$, are extendible
  with respect to $\rho$ and appropriate sets used as $\mbV_1, \mbV_2$.
\end{assumptions}

Clearly, merging together the paths $P_1$, $P_{12}$, $P_2$, and $P_{21}$, closes
a $(2k + 1)$-cycle $C$. Finally, let $Q_1 \subseteq X$ and $Q_2 \subseteq V_0'$
be the subsets of vertices which do not belong to $C$. For $i \in \{1, 2\}$,
exchanging $P_i$ by a $(2k + 1)$-path $P_i^\star$ with the same endpoints as
$P_i$ and such that $V(P_i^\star) = V(P_i) \cup Q_i$, we finally obtain the $(2k
+ 1)$-st power of a Hamilton cycle. \qed

\paragraph{Note.} While finishing the present manuscript we learned that
Antoniuk, Dudek, Reiher, Ruci\'{n}ski, and Schacht have independently obtained
similar results.

{\small \bibliographystyle{abbrv} \bibliography{references}}

\end{document}